\documentclass[10pt, a4paper, leqno]{amsart}
\usepackage[utf8]{inputenc} 
\usepackage[english]{babel} 
\usepackage[usenames, dvipsnames]{xcolor}
\usepackage[a4paper, tmargin=1.0in,bmargin=1.0in, lmargin=1in, rmargin=1in]{geometry} 
\usepackage{bm} 
\usepackage{amsmath} 
\usepackage{amsthm}
\usepackage{amssymb}
\usepackage{dsfont}
\usepackage{cases} 
\usepackage{tikz} 
\usetikzlibrary{arrows.meta, decorations.markings, calc}
\usepackage{stmaryrd} 
\usepackage{multirow} 
\usepackage{lipsum} 
\usepackage{wrapfig} 

\usepackage{breakcites} 
\usepackage{mathtools}   
\usepackage{cases}
\usepackage{float}
\usepackage{esint}

\usepackage{todonotes}

\usepackage{enumerate}
\usepackage[shortlabels]{enumitem}

\usepackage{pgfplots}
\pgfplotsset{compat=1.15}

\usepackage[colorlinks=true,linkcolor=BrickRed,citecolor=RoyalBlue]{hyperref} 
\usepackage{cleveref} 

\usepackage{mathabx}

\theoremstyle{plain}
\newtheorem{proposition}{Proposition}%
\newtheorem{lemma}{Lemma}%
\newtheorem*{theorem*}{Theorem}
\theoremstyle{definition}
\theoremstyle{remark}
\newtheorem{remark}{Remark}%

\newcommand{\xLabel}{x}
\newcommand{\yLabel}{y}
\newcommand{\momentum}{q}
\newcommand{\velocity}{u}
\newcommand{\momentumX}{q_{\xLabel}}
\newcommand{\velocityX}{u_{\xLabel}}
\newcommand{\momentumY}{q_{\yLabel}}
\newcommand{\velocityY}{u_{\yLabel}}
\newcommand{\density}{\rho}
\newcommand{\distributionFunction}{f}
\newcommand{\timeVariable}{t}

\newcommand{\vectorial}[1]{\bm{#1}}
\newcommand{\zeroVelocity}{\circ}
\newcommand{\xPlusVelocity}{\smalltriangleright}
\newcommand{\xMinusVelocity}{\smalltriangleleft}
\newcommand{\yPlusVelocity}{\smalltriangleup}
\newcommand{\yMinusVelocity}{\smalltriangledown}
\newcommand{\collided}{\star}
\newcommand{\timeStep}{\Delta t}
\newcommand{\spaceStep}{\Delta x}
\newcommand{\naturals}{\mathbb{N}}
\newcommand{\relatives}{\mathbb{Z}}
\newcommand{\indexVelocity}{\ell}
\newcommand{\pressureLagrange}{\Phi}
\newcommand{\viscosity}{\nu}
\newcommand{\latticeBoltzmannScheme}[3]{$\textrm{D}_{#1}\textrm{Q}_{#2}^{#3}$}
\newcommand{\diagonalMatrix}{\mathbf{diag}}
\newcommand{\relaxationParameter}{\omega}
\newcommand{\atEquilibrium}{\textrm{eq}}
\newcommand{\linearEquilibrium}{\alpha}
\newcommand{\parabolicScaling}{\mu}
\newcommand{\pressure}{P}
\newcommand{\antiSymmetricMoment}{a}
\newcommand{\symmetricMoment}{s}
\newcommand{\BigO}[1]{\mathcal{O}(#1)}
\newcommand{\densityIncompressible}{\overline{\density}}
\newcommand{\referenceState}[1]{\overline{#1}}
\newcommand{\fourierTransformed}[1]{\hat{#1}}
\newcommand{\frequency}{\xi}

\newcommand{\reals}{\mathbb{R}}
\newcommand{\eps}{\varepsilon}
\newcommand{\Maxwellian}{\vectorial{\mathcal{M}}}
\newcommand{\strong}[1]{\emph{#1}}

\title{A vectorial lattice Boltzmann scheme for the incompressible Navier-Stokes equations}

\author{Denise Aregba-Driollet}
\address{Université de Bordeaux, CNRS, Bordeaux INP, IMB, UMR 5251, 33400 Talence, France}
\email{aregba@math.u-bordeaux.fr}

\author{Thomas Bellotti}
\address{Université Paris-Saclay, CNRS, CentraleSupélec, Laboratoire EM2C \& Fédération de Mathématiques de CentraleSupélec, 91190, Gif-sur-Yvette, France}
\email{thomas.bellotti@centralesupelec.fr}

\author{Roberto Natalini}
\address{Istituto per le Applicazioni del Calcolo, Consiglio Nazionale delle Ricerche, Rome, Italy}
\email{roberto.natalini@cnr.it}

\author{Tommaso Tenna}
\address{Laboratoire J. A. Dieudonné, Université Côte d’Azur, CNRS, F-06108 Nice, France \& Dipartimento di Matematica ``Guido Castelnuovo'', Sapienza Università di Roma, 00185 Rome, Italy}
\email{tommaso.tenna@uniroma1.it}

\keywords{Incompressible Navier-Stokes, lattice Boltzmann method, discrete-velocities, kinetic approximations}
\subjclass[2020]{65M22, 35Q30, 76D05, 76M28}

\begin{document}

\maketitle

\begin{abstract}
    We introduce a second-order accurate vectorial lattice Boltzmann scheme for the incompressible Navier-Stokes system, inspired by a discrete-velocity kinetic approximation proposed by Carfora and Natalini [ESAIM: M2AN, 42(1), 93-112, 2008].
    Advantages and drawbacks compared to relaxation schemes are investigated by providing spectral analyses in the linearized case, and numerical validations on the genuinely non-linear problem.
\end{abstract}


\section{Introduction}

In this work we propose a \strong{lattice Boltzmann method} (often abridged with LBM) for the numerical approximation of the \strong{incompressible Navier-Stokes} system 
\begin{equation}\label{eq:navierStokesSystem}
    \nabla\cdot \vectorial{\velocity} = 0, 
    \qquad
    \partial_{\timeVariable}\vectorial{\velocity}
    +
    \nabla\cdot (\vectorial{\velocity}\otimes\vectorial{\velocity})
    +
    \nabla \pressureLagrange 
    -\viscosity\Delta \vectorial{\velocity}
    =
    \vectorial{0},
\end{equation}
with $\vectorial{\velocity}: \reals_+ \times \reals^d \to \reals^d$ representing the velocity field, endowed with an initial condition.
Here, $\pressureLagrange$ is a Lagrange multiplier ensuring incompressibility and $\viscosity>0$ a viscosity.
For the sake of illustration, we essentially consider the two-dimensional setting with $d = 2$.
However, the extension to $d = 3$ is straightforward thanks to the Cartesian character of the numerical scheme.

Several LBMs have been proposed for the approximation of \eqref{eq:navierStokesSystem}: seminal contributions include \cite{Frisch1986, PhysRevLett.61.2332, chen1992recovery, qian1992lattice, heluo1997, succi2001}, whose schemes---based on either on hexagonal or Cartesian grids---recover the incompressible Navier-Stokes equation in the low-Mach limit. 
Along this vein, a discrete velocity model based on a lattice Boltzmann method for  \eqref{eq:navierStokesSystem} was subsequently provided by Junk and Klar \cite{junkklar2000}, who established a rigorous asymptotic analysis of the so-derived numerical scheme and its macroscopic limit, see also \cite{junk2009convergence}. For the sake of completeness, we also mention \cite{guo2000}, in which a lattice Boltzmann BGK models (LBGK) without compressible effect is designed for simulating incompressible flows. For a review on the topic, one can refer to \cite{succi2001, kruger2017lattice}. 

A common feature of these approaches is that a \strong{scalar} distribution function is employed, and thus the (almost constant) density is recovered as zero-order moment of the distribution function, whereas the \strong{momentum} is a first-order moment in the discrete velocities.
A well-known feature of these numerical schemes is their lack of Galilean invariance, which manifests via the appearance of cubic terms in the flow velocity in the stress tensor.
Moreover, this approach has limited flexibility when additional equations are present in the target system, e.g. \cite{kataoka2004lattice}, since a large number of discrete velocities is needed in this context.

Another approach \cite{natalini1998discrete, aregba2000discrete}, which we indeed follow, is to consider a \strong{vectorial} distribution function, which has as many components as the number of conserved quantities, which are all zero-order moments in the velocities. 
This approach has driven a significant number of recent works, such as \cite{bellotti2025fourth, wissocq2025positive} with LBMs for hyperbolic systems.
Moreover, a very recent paper \cite{dimarco2025asymptotic} followed this path and proposed a high-order approximation of a discrete velocity method inspired by the LBM for the incompressible Navier-Stokes equations.

The numerical scheme that we introduce is inspired by the discrete BGK-type approximation proposed by Carfora and Natalini in \cite{carfora2008discrete}, later analyzed in detail by \cite{bouchut2018}. 
The kinetic model reads as follows
\begin{equation}
\label{eq:discreteBGKmodel_NS}
    \partial_{\timeVariable} \vectorial{\distributionFunction}_{\ell} + \frac{\vectorial{\lambda}_{\ell}}{\eps} \cdot\nabla_{\vectorial{x}} \vectorial{\distributionFunction}_{\ell} = \frac{1}{\tau\,\eps^2} \left(\Maxwellian_{\ell}(\density, \eps \density \vectorial{\velocity}) - \vectorial{\distributionFunction}_{\ell} \right), \qquad {\ell}=1,\dots,L,
\end{equation}
where $L \geq d+1$ is the number of discrete velocities, $\vectorial{\distributionFunction}_{\ell}$ and $\Maxwellian_{\ell}$ take values in $\reals^{d+1}$, $\eps>0$ is the relaxation parameter and $\vectorial{\lambda}_{\ell} = (\lambda_{{\ell}1},\dots,\lambda_{{\ell}d})$ is the $\reals^d$-vector of discrete velocities. 
Here, $\density$ is a density field, as the approach introduces some \strong{artificial compressibility}, and shall be constant at leading-order.
The functions $\Maxwellian_{\ell}$ are the Maxwellian (or equilibrium) functions, assumed to be Lipschitz continuous. Finally we set 
\begin{equation}
    \begin{pmatrix}
            \density\\
            \eps\,\momentum_1\\
            \vdots\\
            \eps\,\momentum_d
        \end{pmatrix} 
        (\timeVariable,\vectorial{x})
        =
         \begin{pmatrix}
            \density\\
            \eps\,\density\velocity_1\\
            \vdots\\
            \eps\,\density\velocity_d
        \end{pmatrix} 
        (\timeVariable,\vectorial{x})
        = \sum_{{\ell}=1}^L \vectorial{\distributionFunction}_{\ell}(\timeVariable,\vectorial{x}).
\end{equation}
Provided that the discrete velocities $\vectorial{\lambda}_{\ell}$ and the Maxwellian functions $\Maxwellian_{\ell}$ satisfy appropriate consistency conditions as in \cite{carfora2008discrete}, the singular perturbation system formally admits \eqref{eq:navierStokesSystem} as its hydrodynamic limit. In particular, the approximation of this system described in \cite{carfora2008discrete}---indeed a \strong{relaxation scheme}---benefits from the stability condition derived from a discrete-velocity analogue of the Boltzmann H-theorem, as shown in \cite{bouchut1999}. 
The vectorial lattice Boltzmann method can be designed in analogy with the discrete BGK model \eqref{eq:discreteBGKmodel_NS}. 

Before going on, note the following.
In a context where the target equation is a system of hyperbolic conservation law \cite{aregba2000discrete}, relaxation away from the equilibrium in lattice Boltzmann schemes is a way of reducing numerical diffusion \cite{graille2014approximation}, which is a by-product of the discretization.
In this case, relaxation parameters are dictated by the need for reducing numerical diffusion while keeping stability.
When the target problem (e.g., \eqref{eq:navierStokesSystem}) contains dissipation as part of the model, relaxation away from the equilibrium participates---in conjunction with the Maxwellians---to providing the dissipation structure. 
For this reason, one of the questions to be elucidated in this work is whether avoiding a relaxation scheme \cite{carfora2008discrete} is a good choice.

The paper is structured as follows.
In \Cref{sec:numericalScheme} we introduce the numerical scheme, whose second-order consistency with \eqref{eq:navierStokesSystem} is analyzed in \Cref{sec:consistency}.
Then, in \Cref{sec:entropy}, we establish entropy stability in the under-relaxation case.
\Cref{sec:spectra} is devoted to a study of several features of the scheme by investigating spectra of its linearization.
Numerical validations both with periodic and non-trivial boundary conditions are provided in \Cref{sec:numericalExp}.
We finally draw general conclusions and perspectives on our work in \Cref{sec:conclusions}.

\section{Numerical scheme}\label{sec:numericalScheme}

We consider a vectorial \latticeBoltzmannScheme{2}{5}{3} scheme, where the discrete velocities are the same regardless of the moment at hand.
However, the discussion can be easily adapted to different discrete velocities, see \cite{dubois2014simulation}.
Still, we allow for \strong{different relaxation parameters} according to the considered equation.
Let $\timeVariable\in\timeStep\naturals$ and $\xLabel, \yLabel\in\spaceStep(\relatives+\frac{1}{2})$ with $\timeStep>0$ and $\spaceStep>0$.\footnote{The fact of considering grid-points indexed on $\relatives+\frac{1}{2}$ is not essential here where boundaries are not considered, but allows to easily write boundary conditions where walls are half-way between two grid-points.}
It is quite crucial that we consider the following \strong{parabolic (or diffusive) scaling} 
\begin{equation}\label{eq:parabolicScaling}
    \frac{\spaceStep^2}{\timeStep} = \parabolicScaling>0 
    \qquad
    \text{fixed,}
\end{equation}
between time and space discretization.
The algorithm proceeds as follows.
\begin{itemize}
    \item \strong{Relaxation} phase.
    Define
    \begin{equation*}
        \begin{pmatrix}
            \density\\
            \spaceStep\,\momentumX\\
            \spaceStep\,\momentumY
        \end{pmatrix}
        (\timeVariable, \xLabel, \yLabel)
        =
        \sum_{\indexVelocity\in \{\zeroVelocity, \xPlusVelocity, \yPlusVelocity, \xMinusVelocity, \yMinusVelocity\}}
        \vectorial{\distributionFunction}_{\indexVelocity}(\timeVariable, \xLabel, \yLabel),
    \end{equation*}
    and relax following 
    \begin{multline}\label{eq:LBM_scheme}
        \vectorial{\distributionFunction}_{\indexVelocity}^{\collided}(\timeVariable, \xLabel, \yLabel)
        =
        \diagonalMatrix(1-\relaxationParameter_{\density}, 1-\relaxationParameter_{\momentumX}, 1-\relaxationParameter_{\momentumY})
        \vectorial{\distributionFunction}_{\indexVelocity}(\timeVariable, \xLabel, \yLabel)\\
        +
        \diagonalMatrix(\relaxationParameter_{\density}, \relaxationParameter_{\momentumX}, \relaxationParameter_{\momentumY})
        \vectorial{\distributionFunction}^{\atEquilibrium}_{\indexVelocity}
        (\density(\timeVariable, \xLabel, \yLabel), \momentumX(\timeVariable, \xLabel, \yLabel), \momentumY(\timeVariable, \xLabel, \yLabel))
    \end{multline}
    for every $\indexVelocity\in \{\zeroVelocity, \xPlusVelocity, \yPlusVelocity, \xMinusVelocity, \yMinusVelocity\}$.
    The relaxation parameters are taken as $\relaxationParameter_{\density}, \relaxationParameter_{\momentumX}, \relaxationParameter_{\momentumY}\in (0, 2]$.
    Note that taking them equal to one corresponds to a \strong{relaxation} which is a projection on the equilibrium, and one thus recovers a \strong{relaxation scheme}.
    Equilibria are defined by 
    \begin{align*}
        &\vectorial{\distributionFunction}^{\atEquilibrium}_{\zeroVelocity}(\density, \momentumX, \momentumY)
        =
        \begin{pmatrix}
            (1-4\linearEquilibrium_{\density})\density\\
            (1-4\linearEquilibrium_{\momentumX})\momentumX\\
            (1-4\linearEquilibrium_{\momentumY})\momentumY
        \end{pmatrix},
        \\ 
        &\vectorial{\distributionFunction}^{\atEquilibrium}_{\xPlusVelocity}(\density, \momentumX, \momentumY)
        =
        \begin{pmatrix}
            \linearEquilibrium_{\density}\density + \frac{\spaceStep}{2\parabolicScaling}\momentumX\\
            \spaceStep\linearEquilibrium_{\momentumX}\momentumX + \frac{\spaceStep^2}{2\parabolicScaling}\frac{\momentumX^2}{\density} + \frac{1}{2\parabolicScaling} \pressure(\density)\\
            \spaceStep\linearEquilibrium_{\momentumY}\momentumY + \frac{\spaceStep^2}{2\parabolicScaling}\frac{\momentumX\momentumY}{\density}
        \end{pmatrix},\quad
        \vectorial{\distributionFunction}^{\atEquilibrium}_{\xMinusVelocity}(\density, \momentumX, \momentumY)
        =
        \begin{pmatrix}
            \linearEquilibrium_{\density}\density - \frac{\spaceStep}{2\parabolicScaling}\momentumX\\
            \spaceStep\linearEquilibrium_{\momentumX}\momentumX - \frac{\spaceStep^2}{2\parabolicScaling}\frac{\momentumX^2}{\density} - \frac{1}{2\parabolicScaling} \pressure(\density)\\
            \spaceStep\linearEquilibrium_{\momentumY}\momentumY - \frac{\spaceStep^2}{2\parabolicScaling}\frac{\momentumX\momentumY}{\density}
        \end{pmatrix},\\
        &\vectorial{\distributionFunction}^{\atEquilibrium}_{\yPlusVelocity}(\density, \momentumX, \momentumY)
        =
        \begin{pmatrix}
            \linearEquilibrium_{\density}\density + \frac{\spaceStep}{2\parabolicScaling}\momentumY\\
            \spaceStep\linearEquilibrium_{\momentumX}\momentumX + \frac{\spaceStep^2}{2\parabolicScaling}\frac{\momentumX\momentumY}{\density}\\
            \spaceStep\linearEquilibrium_{\momentumY}\momentumY + \frac{\spaceStep^2}{2\parabolicScaling}\frac{\momentumY^2}{\density} + \frac{1}{2\parabolicScaling} \pressure(\density)
        \end{pmatrix},\quad
        \vectorial{\distributionFunction}^{\atEquilibrium}_{\yMinusVelocity}(\density, \momentumX, \momentumY)
        =
        \begin{pmatrix}
            \linearEquilibrium_{\density}\density - \frac{\spaceStep}{2\parabolicScaling}\momentumY\\
            \spaceStep\linearEquilibrium_{\momentumX}\momentumX - \frac{\spaceStep^2}{2\parabolicScaling}\frac{\momentumX\momentumY}{\density}\\
            \spaceStep\linearEquilibrium_{\momentumY}\momentumY - \frac{\spaceStep^2}{2\parabolicScaling}\frac{\momentumY^2}{\density} - \frac{1}{2\parabolicScaling} \pressure(\density)
        \end{pmatrix}.
    \end{align*}
    In these equilibria, we make use of the pressure law $\pressure(\density) = \density^{\gamma}$ with $\gamma\geq 1$.\footnote{Usually, we consider the value $\gamma = 1$.}
    Moreover, we have the real coefficients $\linearEquilibrium_{\density}, \linearEquilibrium_{\momentumX}$, and $\linearEquilibrium_{\momentumY}$ to be chosen.
    \begin{remark}
        Compared to \cite{carfora2008discrete}, we allow different parameters $\linearEquilibrium$ according to the considered conserved quantities.
    \end{remark}
    \item \strong{Transport} phase.
    \begin{align*}
    \vectorial{\distributionFunction}_{\zeroVelocity}(\timeVariable + \timeStep, \xLabel, \yLabel)
    =
    \vectorial{\distributionFunction}_{\zeroVelocity}^{\collided}(\timeVariable, \xLabel, \yLabel), \quad
    &\vectorial{\distributionFunction}_{\xPlusVelocity}(\timeVariable + \timeStep, \xLabel, \yLabel)
    =
    \vectorial{\distributionFunction}_{\xPlusVelocity}^{\collided}(\timeVariable, \xLabel-\spaceStep, \yLabel), \quad 
    \vectorial{\distributionFunction}_{\xMinusVelocity}(\timeVariable + \timeStep, \xLabel, \yLabel)
    =
    \vectorial{\distributionFunction}_{\xMinusVelocity}^{\collided}(\timeVariable, \xLabel+\spaceStep, \yLabel), \\
    &\vectorial{\distributionFunction}_{\yPlusVelocity}(\timeVariable + \timeStep, \xLabel, \yLabel)
    =
    \vectorial{\distributionFunction}_{\yPlusVelocity}^{\collided}(\timeVariable, \xLabel, \yLabel-\spaceStep), \quad 
    \vectorial{\distributionFunction}_{\yMinusVelocity}(\timeVariable + \timeStep, \xLabel, \yLabel)
    =
    \vectorial{\distributionFunction}_{\yMinusVelocity}^{\collided}(\timeVariable, \xLabel, \yLabel+\spaceStep).
\end{align*}
\end{itemize}
Without further mention, initial distribution functions are taken at the equilibrium.

\section{Consistency analysis of the numerical scheme}\label{sec:consistency}

\begin{proposition}\label{prop:Consistency}
    Consider the parabolic scaling \eqref{eq:parabolicScaling}.
    Assume that the numerical solution is obtained as a point-wise sampling of underlying smooth functions of the time and space variables.
    Then, in the limit for $\spaceStep\ll 1$, the underlying smooth function corresponding to the conserved moment $\density$ formally fulfills
    \begin{equation}\label{eq:modifiedEquationDensity}
        \partial_{\timeVariable}
        \density
        +
        \partial_{\xLabel}
        \momentumX
        +
        \partial_{\yLabel}
        \momentumY 
        -
        2\parabolicScaling \linearEquilibrium_{\density}
        \Bigl ( 
            \frac{1}{\relaxationParameter_{\density}}-\frac{1}{2}
        \Bigr )
        (\partial_{\xLabel\xLabel}\density+\partial_{\yLabel\yLabel}\density)
        =\BigO{\spaceStep^2}.
    \end{equation}
    Assuming that
\begin{equation}\label{eq:densityTendingToConstant}
    \pressure(\density(\timeVariable, \xLabel, \yLabel))
    =
    \pressure(\densityIncompressible)
    +
    \spaceStep^2
    \densityIncompressible\pressureLagrange(\timeVariable, \xLabel, \yLabel)
    +\BigO{\spaceStep^3},
\end{equation}
where $\densityIncompressible> 0$ is independent of space and time, the other two conserved moment fulfill
\begin{align}
    \partial_{\timeVariable}\momentumX
    +
    \partial_{\xLabel}
    \Bigl ( \frac{\momentumX^2}{\density} 
         +  \densityIncompressible\pressureLagrange 
    \Bigr )
    +
    \partial_{\yLabel}
    \Bigl ( \frac{\momentumX\momentumY}{\density} 
    \Bigr ) - 2\parabolicScaling \linearEquilibrium_{\momentumX} \Bigl (\frac{1}{\relaxationParameter_{\momentumX}}-\frac{1}{2}\Bigr ) (\partial_{\xLabel\xLabel}\momentumX+\partial_{\yLabel\yLabel}\momentumX) =\BigO{\spaceStep^2}, \label{eq:modifiedEquationMomentumX} \\
    \partial_{\timeVariable}\momentumY
    +
    \partial_{\xLabel}
    \Bigl ( \frac{\momentumX\momentumY}{\density} 
    \Bigr )
    +
    \partial_{\yLabel}
    \Bigl ( \frac{\momentumY^2}{\density} 
         +  \densityIncompressible\pressureLagrange 
    \Bigr )
    - 2\parabolicScaling \linearEquilibrium_{\momentumY} \Bigl (\frac{1}{\relaxationParameter_{\momentumY}}-\frac{1}{2}\Bigr ) (\partial_{\xLabel\xLabel}\momentumY+\partial_{\yLabel\yLabel}\momentumY) =\BigO{\spaceStep^2}. \label{eq:modifiedEquationMomentumY}
\end{align}
\end{proposition}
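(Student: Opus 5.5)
A natural route is a moment-based Taylor expansion (Maxwell-iteration / equivalent-equation approach) carried out separately on each of the three scalar components of the vectorial scheme. The scheme is diagonal in the components: each component $\phi\in\{\density,\spaceStep\momentumX,\spaceStep\momentumY\}$ evolves by a scalar \latticeBoltzmannScheme{2}{5}{} scheme with its own $\relaxationParameter_\phi$, and only the equilibria couple the components. For one component we work with three moments: the conserved one $\phi=\sum_\ell f_\ell$; the two first-order moments $j_x=f_{\xPlusVelocity}-f_{\xMinusVelocity}$ and $j_y=f_{\yPlusVelocity}-f_{\yMinusVelocity}$ (times $\spaceStep/\timeStep$); and the two second-order moments $e_x=f_{\xPlusVelocity}+f_{\xMinusVelocity}$ and $e_y$. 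The transport step gives exact relations such as
\[
\phi(t+\timeStep)=\phi^\collided+\tfrac12(S_x^{+}-S_x^{-})j_x^\collided+\tfrac12(S_x^{+}+S_x^{-}-2)e_x^\collided+(\text{same in }y),
\]
where $S_x^{\pm}$ are the shift operators $x\mapsto x\mp\spaceStep$. The collision step gives $m^\collided=(1-\relaxationParameter)m+\relaxationParameter m^{\atEquilibrium}$ for the non-conserved moments and $\phi^\collided=\phi$.

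Next we Taylor-expand in $\spaceStep$ under $\timeStep=\spaceStep^2/\parabolicScaling$. The first-order moment at equilibrium is $\spaceStep/\parabolicScaling$ times the flux: for $\density$ it is $(\spaceStep/\parabolicScaling)\momentumX$, and for $\spaceStep\momentumX$ it is $(\spaceStep/\parabolicScaling)\cdot\spaceStep(\momentumX^2/\density+\pressure/\spaceStep^2)$. The second-order equilibrium is $2\linearEquilibrium_\phi\phi$. Dividing the update of $\phi$ by $\timeStep$, the $j$ term yields the flux divergence and the $e$ term yields $(\parabolicScaling/2)\partial_{xx}e_x$. The non-equilibrium part of $j$ is obtained from its own update, $j(t+\timeStep)=(1-\relaxationParameter)j+\relaxationParameter j^{\atEquilibrium}-\spaceStep\,\partial_x(\ldots)+\dots$. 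To leading order this gives
\[
j-j^{\atEquilibrium}\approx -\bigl(\tfrac1{\relaxationParameter}-1\bigr)\spaceStep\,\partial_x e^{\atEquilibrium}.
\]
Combining this with the direct $\tfrac12\partial_{xx}e$ term produces the well-known factor $(1/\relaxationParameter-1/2)$. The result is the diffusion coefficient $2\parabolicScaling\linearEquilibrium_\phi(1/\relaxationParameter_\phi-1/2)$, which explains the form in \eqref{eq:modifiedEquationDensity}. The time derivative of $j$ enters only at relative order $\timeStep/\spaceStep=\spaceStep/\parabolicScaling$. By the parity of the stencil (symmetry $x\to -x$), the odd-order contributions vanish, so the remainder is $\BigO{\spaceStep^2}$ rather than $\BigO{\spaceStep}$. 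This last point needs a careful second step of the expansion to show that the $\BigO{\spaceStep}$ terms cancel.

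For the momentum components we divide by $\spaceStep$. The main obstacle is the pressure term: the equilibrium flux contains $\pressure(\density)/(2\parabolicScaling)$, which is $\BigO{1}$ rather than $\BigO{\spaceStep^2}$, so a naive Taylor expansion produces a term $\spaceStep^{-2}\partial_x\pressure$. Here we invoke \eqref{eq:densityTendingToConstant}. Since $\pressure(\densityIncompressible)$ is constant, its derivatives vanish, and $\spaceStep^{-2}\partial_x\pressure=\densityIncompressible\partial_x\pressureLagrange+\BigO{\spaceStep}$. The $\BigO{\spaceStep}$ correction must be shown to be even in structure, or absorbed, for the $\BigO{\spaceStep^2}$ remainder to hold. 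I would check this by inserting the ansatz with the $\spaceStep^3$ term and using the parity argument again. A second point to control is that the nonequilibrium correction of $j$ now contains terms $\spaceStep\,\partial_x e^{\atEquilibrium}$ together with $\timeStep\,\partial_t j^{\atEquilibrium}$, where $j^{\atEquilibrium}$ carries $\pressure/\spaceStep^2$-type size. Since $\partial_t\pressure=\BigO{\spaceStep^2}$ by \eqref{eq:densityTendingToConstant}, these contributions are again of higher order, and the same diffusion coefficient $2\parabolicScaling\linearEquilibrium_{\momentumX}(1/\relaxationParameter_{\momentumX}-1/2)$ remains. With this, \eqref{eq:modifiedEquationMomentumX} and \eqref{eq:modifiedEquationMomentumY} follow.
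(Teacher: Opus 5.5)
Your proposal takes essentially the same route as the paper. You rewrite each component on the conserved, antisymmetric and symmetric moments and Taylor-expand the transport under $\Delta t=\Delta x^2/\mu$. You then get the non-equilibrium part of the antisymmetric moment from its own update, use the pressure assumption to remove the constant $P(\bar\rho)$ contribution, and appeal to the parity of the centered stencil for the $O(\Delta x^2)$ remainder, which the paper also argues only formally. One small labelling slip: $-(1/\omega-1)\Delta x\,\partial_x e^{\mathrm{eq}}$ is the post-collision deviation $j^{\star}-j^{\mathrm{eq}}$, while the pre-collision one is $-\frac{1}{\omega}\Delta x\,\partial_x e^{\mathrm{eq}}$, as in the paper's formula for $a_x$. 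Since it is $j^{\star}$ that enters the update of $\phi$, your coefficient $2\mu\alpha(1/\omega-1/2)$ is nonetheless correct.
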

Notice that thanks to \eqref{eq:densityTendingToConstant}, we obtain that $\density(\timeVariable, \xLabel, \yLabel)
    =
    \densityIncompressible + \BigO{\spaceStep^2}$, where the $\BigO{\spaceStep^2}$-term contains the dependence on time and space.
Writing $\momentumX = \density\velocityX$ and $\momentumY = \density\velocityY$, \eqref{eq:modifiedEquationDensity} becomes 
\begin{equation*}
    \partial_{\xLabel}\velocityX
    +
    \partial_{\yLabel}\velocityY = \BigO{\spaceStep^2},
\end{equation*}
which is the first equation in \eqref{eq:navierStokesSystem} when truncated to the leading order.
We hence see that, provided that \eqref{eq:densityTendingToConstant} holds true, the value of 
\begin{equation*}
    2\parabolicScaling \linearEquilibrium_{\density}
        \Bigl ( 
            \frac{1}{\relaxationParameter_{\density}}-\frac{1}{2}
        \Bigr )
\end{equation*}
does not prevent consistency. Still, this choice can impact accuracy, as well as influencing the stability of the numerical algorithm.
For \eqref{eq:modifiedEquationMomentumX}--\eqref{eq:modifiedEquationMomentumY}, dividing by $\densityIncompressible>0$, we get 
\begin{align*}
    \partial_{\timeVariable}\velocityX
    +
    \partial_{\xLabel}
     ( \velocityX^2
         +  \pressureLagrange 
     )
    +
    \partial_{\yLabel}
     ( \velocityX\velocityY
     ) - 2\parabolicScaling \linearEquilibrium_{\momentumX} \Bigl (\frac{1}{\relaxationParameter_{\momentumX}}-\frac{1}{2}\Bigr ) (\partial_{\xLabel\xLabel}\velocityX+\partial_{\yLabel\yLabel}\velocityX) =\BigO{\spaceStep^2},\\
    \partial_{\timeVariable}\velocityY
    +
    \partial_{\xLabel}
     ( \velocityY\velocityX
     )
    +
    \partial_{\yLabel}
     ( \velocityY^2
         +  \pressureLagrange 
 )
    - 2\parabolicScaling \linearEquilibrium_{\momentumY} \Bigl (\frac{1}{\relaxationParameter_{\momentumY}}-\frac{1}{2}\Bigr ) (\partial_{\xLabel\xLabel}\velocityY+\partial_{\yLabel\yLabel}\velocityY) =\BigO{\spaceStep^2}. 
\end{align*}
Consistency with the second equation in \eqref{eq:navierStokesSystem} is therefore obtained upon having 
\begin{equation*}
    2\parabolicScaling \linearEquilibrium_{\momentumX} \Bigl (\frac{1}{\relaxationParameter_{\momentumX}}-\frac{1}{2}\Bigr )
    =
    2\parabolicScaling \linearEquilibrium_{\momentumY} \Bigl (\frac{1}{\relaxationParameter_{\momentumY}}-\frac{1}{2}\Bigr )
    =
    \viscosity.
\end{equation*}
If we consider the scaling $\parabolicScaling>0$ immutable, the right viscosity $\viscosity$ can be obtained both leveraging the $\linearEquilibrium$'s and the $\relaxationParameter$'s.
Of course, relaxation parameters equal to two are not allowed, as they yield inviscid behavior.
\begin{proof}[Proof of \Cref{prop:Consistency}]
    Let us discuss how to obtain the equation on $\momentumX$: the one on $\momentumY$ is obtained analogously, and the one for $\density$ is even simpler to get.
    We rewrite the numerical scheme using the moments:
\begin{equation*}
    \begin{pmatrix}
        \momentumX\\
        \antiSymmetricMoment_{\xLabel}\\
        \symmetricMoment_{\xLabel}\\
        \antiSymmetricMoment_{\yLabel}\\
        \symmetricMoment_{\yLabel}
    \end{pmatrix}
    =
    \begin{pmatrix}
        \frac{1}{\spaceStep} & \frac{1}{\spaceStep}& \frac{1}{\spaceStep}&\frac{1}{\spaceStep}&\frac{1}{\spaceStep}\\
        0 & 1 & -1 & 0 & 0\\
        0 & 1 & 1 & 0  & 0\\
        0 & 0 & 0 & 1 & -1\\
        0 & 0 & 0 & 1 & 1
    \end{pmatrix}
    \begin{pmatrix}
        {\distributionFunction}_{\zeroVelocity}\\
        {\distributionFunction}_{\xPlusVelocity}\\
        {\distributionFunction}_{\xMinusVelocity}\\
        {\distributionFunction}_{\yPlusVelocity}\\
        {\distributionFunction}_{\yMinusVelocity}
    \end{pmatrix}.
\end{equation*}
We drop subscripts in $\linearEquilibrium_{\xLabel}$ and $\relaxationParameter_{\momentumX}$. 
The relaxation hence becomes 
\begin{align*}
    \momentumX^{\collided} = \momentum, \quad 
    &\antiSymmetricMoment_{\xLabel}^{\collided}
    =
    (1-\relaxationParameter)\antiSymmetricMoment_{\xLabel}
    +
    \relaxationParameter
    \Bigl ( 
        \frac{\spaceStep^2}{\parabolicScaling}\frac{\momentumX^2}{\density} + \frac{1}{\parabolicScaling} \pressure(\density)
    \Bigr ), 
    \quad 
    \symmetricMoment_{\xLabel}^{\collided}
    =
    (1-\relaxationParameter)\symmetricMoment_{\xLabel}
    +
    \relaxationParameter(2
    \spaceStep\linearEquilibrium \momentumX), \\
    &\antiSymmetricMoment_{\yLabel}^{\collided}
    =
    (1-\relaxationParameter) \antiSymmetricMoment_{\yLabel}
    +
    \relaxationParameter
    \Bigl ( \frac{\spaceStep^2}{\parabolicScaling}\frac{\momentumX\momentumY}{\density} \Bigr ), 
    \quad 
    \symmetricMoment_{\yLabel}^{\collided}
    =
    (1-\relaxationParameter) \symmetricMoment_{\yLabel}
    +
    \relaxationParameter(2\spaceStep\linearEquilibrium\momentumX).
\end{align*}
For the transport phase, we perform Taylor expansions when it is written on the moments:
\begin{multline*}\renewcommand*{\arraystretch}{0.5}
    \Bigl (1
    +  
    \frac{\spaceStep^2}{\parabolicScaling}
    \partial_{\timeVariable}
    +
    \BigO{\spaceStep^4}
    \Bigr )
    \begin{pmatrix}
        \momentumX\\
        \antiSymmetricMoment_{\xLabel}\\
        \symmetricMoment_{\xLabel}\\
        \antiSymmetricMoment_{\yLabel}\\
        \symmetricMoment_{\yLabel}
    \end{pmatrix}
    =
    \Biggl [
    \begin{pmatrix}
        1 & -\partial_{\xLabel} & 0 & -\partial_{\yLabel} & 0\\
        0 & 1 & 0 & 0 & 0\\
        0 & 0 & 1 & 0 & 0\\
        0 & 0 & 0 & 1 & 0\\
        0 & 0 & 0 & 0 & 1
    \end{pmatrix}\\
    +\renewcommand*{\arraystretch}{0.5}
    \spaceStep
    \begin{pmatrix}
        0 & 0 & \frac{1}{2}\partial_{\xLabel\xLabel} & 0 & \frac{1}{2}\partial_{\yLabel\yLabel}\\
        0 & 0 & -\partial_{\xLabel} & 0 & 0\\
        0 & -\partial_{\xLabel} &0 & 0 & 0\\
        0 & 0 & 0 & 0 & -\partial_{\yLabel}\\
        0 & 0 & 0 & -\partial_{\yLabel} & 0
    \end{pmatrix}
    +
    \frac{\spaceStep^2}{2}
    \begin{pmatrix}
        0 & -\frac{1}{3}\partial_{\xLabel}^3 & 0 & -\frac{1}{3}\partial_{\yLabel}^3 & 0\\
        0 & \partial_{\xLabel\xLabel} & 0 & 0 & 0\\
        0 & 0 & \partial_{\xLabel\xLabel} & 0 & 0\\
        0 & 0 & 0 & \partial_{\yLabel\yLabel} & 0\\
        0 & 0 & 0 & 0 & \partial_{\yLabel\yLabel}
    \end{pmatrix}\\
    +\renewcommand*{\arraystretch}{0.5}
    \frac{\spaceStep^3}{6}
    \begin{pmatrix}
        0 & 0 & \frac{1}{4}\partial_{\xLabel}^4 & 0 & \frac{1}{4}\partial_{\yLabel}^4\\
        0 & 0 & -\partial_{\xLabel}^3 & 0 & 0\\
        0 & -\partial_{\xLabel}^3 &0 & 0 & 0\\
        0 & 0 & 0 & 0 & -\partial_{\yLabel}^3\\
        0 & 0 & 0 & -\partial_{\yLabel}^3 & 0
    \end{pmatrix}
    +\BigO{\spaceStep^4}
    \Biggr ]
    \begin{pmatrix}
        \momentumX^{\collided}\\
        \antiSymmetricMoment_{\xLabel}^{\collided}\\
        \symmetricMoment_{\xLabel}^{\collided}\\
        \antiSymmetricMoment_{\yLabel}^{\collided}\\
        \symmetricMoment_{\yLabel}^{\collided}
    \end{pmatrix}.
\end{multline*}
Assumption \eqref{eq:densityTendingToConstant} entails, since $\partial_{\xLabel}  \pressure(\densityIncompressible) \equiv 0$, that 
\begin{align*}
    \antiSymmetricMoment_{\xLabel} + \BigO{\spaceStep^2} 
    =
    (1-\relaxationParameter)\antiSymmetricMoment_{\xLabel} 
    +\relaxationParameter
    \pressure(\densityIncompressible)
    -\spaceStep(1-\relaxationParameter)\partial_{\xLabel}\symmetricMoment_{\xLabel}
    + \BigO{\spaceStep^2}, \\
    \symmetricMoment_{\xLabel} + \BigO{\spaceStep^2} 
    =
    (1-\relaxationParameter)\symmetricMoment_{\xLabel}
    +
    \relaxationParameter(2
    \spaceStep\linearEquilibrium \momentumX)
    +\BigO{\spaceStep^2}.
\end{align*}
The second equation entails that $\symmetricMoment_{\xLabel}
    =
    2
    \spaceStep\linearEquilibrium \momentumX
    +
    \BigO{\spaceStep^2}$, hence into the first one: $\antiSymmetricMoment_{\xLabel} = \frac{1}{\parabolicScaling} \pressure(\densityIncompressible) + \BigO{\spaceStep^2}$.
We can go further on $\antiSymmetricMoment_{\xLabel}$ incorporating previous information, and obtain
\begin{equation*}
    \antiSymmetricMoment_{\xLabel}
    +
    \BigO{\spaceStep^4}
    =
    (1-\relaxationParameter)\antiSymmetricMoment_{\xLabel}
    +
    \relaxationParameter
    \Bigl ( 
        \frac{\spaceStep^2}{\parabolicScaling}\frac{\momentumX^2}{\density} + \frac{1}{\parabolicScaling} \pressure(\densityIncompressible)
         + \frac{\spaceStep^2}{\parabolicScaling} \densityIncompressible\pressureLagrange + \BigO{\spaceStep^4}
    \Bigr )
    - 2
    \spaceStep^2\linearEquilibrium \partial_{\xLabel}\momentumX 
    +
    \BigO{\spaceStep^3},
\end{equation*} 
hence 
\begin{equation*}
    \antiSymmetricMoment_{\xLabel}
    =
    \frac{\spaceStep^2}{\parabolicScaling}\frac{\momentumX^2}{\density} + \frac{1}{\parabolicScaling} \pressure(\densityIncompressible)
         + \frac{\spaceStep^2}{\parabolicScaling} \densityIncompressible\pressureLagrange 
        - 
    \spaceStep^2 \frac{2\linearEquilibrium}{\relaxationParameter}  \partial_{\xLabel}\momentumX 
    +
    \BigO{\spaceStep^3}.
\end{equation*}
We obtain analogously 
\begin{equation*}
    \symmetricMoment_{\yLabel}
    =
    2
    \spaceStep\linearEquilibrium \momentumX
    +
    \BigO{\spaceStep^2}
    \quad \text{and}\quad 
    \antiSymmetricMoment_{\yLabel}
    =
    \frac{\spaceStep^2}{\parabolicScaling}\frac{\momentumX\momentumY}{\density} 
        - 
    \spaceStep^2 \frac{2\linearEquilibrium}{\relaxationParameter}  \partial_{\xLabel}\momentumX 
    +
    \BigO{\spaceStep^3}.
\end{equation*}
Into the equation of the conserved moment 
\begin{align*}
    \frac{\spaceStep^2}{\parabolicScaling}
    \partial_{\timeVariable}\momentumX+\BigO{\spaceStep^4}
    =
    &-
    \partial_{\xLabel}
    \Bigl ( 
\frac{\spaceStep^2}{\parabolicScaling}\frac{\momentumX^2}{\density} 
         + \frac{\spaceStep^2}{\parabolicScaling} \densityIncompressible\pressureLagrange 
        - 
    \spaceStep^2 2\linearEquilibrium \Bigl (\frac{1}{\relaxationParameter}-1 \Bigr )  \partial_{\xLabel}\momentumX
    \Bigr ) +
    \spaceStep^2 \linearEquilibrium \partial_{\xLabel\xLabel}\momentumX\\
    &-
    \partial_{\yLabel}
    \Bigl ( \frac{\spaceStep^2}{\parabolicScaling}\frac{\momentumX\momentumY}{\density} 
        - 
    \spaceStep^2 {2\linearEquilibrium}\Bigl (\frac{1}{\relaxationParameter}-1 \Bigr )   \partial_{\xLabel}\momentumX
    \Bigr ) +
    \spaceStep^2 \linearEquilibrium \partial_{\yLabel\yLabel}\momentumX + \BigO{\spaceStep^3},
\end{align*}
hence 
\begin{equation*}
    \partial_{\timeVariable}\momentumX
    +
    \partial_{\xLabel}
    \Bigl ( \frac{\momentumX^2}{\density} 
         +  \densityIncompressible\pressureLagrange 
    \Bigr )
    +
    \partial_{\yLabel}
    \Bigl ( \frac{\momentumX\momentumY}{\density} 
    \Bigr ) - 2\parabolicScaling \linearEquilibrium \Bigl (\frac{1}{\relaxationParameter}-\frac{1}{2}\Bigr ) (\partial_{\xLabel\xLabel}\momentumX+\partial_{\yLabel\yLabel}\momentumX) =\BigO{\spaceStep}.
\end{equation*}
The fact that the reminder $\BigO{\spaceStep}$ is indeed $\BigO{\spaceStep^2}$ can be argued because $\BigO{\timeStep} = \BigO{\spaceStep^2}$ as far as time errors are concerned.
For space errors, we conclude by the fact that---from the symmetry of the discrete velocities---the finite difference operators appearing in the scheme are centered.

\end{proof}

\section{Entropy Dissipation for the LBM with under-relaxation}\label{sec:entropy}

Let us first define the discrete macroscopic quantities, by summing the kinetic distribution $\vectorial{\distributionFunction}_\indexVelocity$ over the discrete velocity space, namely 
\begin{equation}
    \vectorial{W} = \sum_{\indexVelocity\in \{\zeroVelocity, \xPlusVelocity, \yPlusVelocity, \xMinusVelocity, \yMinusVelocity\}} \vectorial{\distributionFunction}_\indexVelocity.
\end{equation}
These correspond to the \strong{conserved quantities} (density and momentums) associated to the target artificial compressible equations, for which we can define a strictly convex macroscopic entropy $\eta(\vectorial{W})$. In the remainder of this section, as done in \cite{carfora2008discrete}, we assume that $\vectorial{W} \in \mathcal{U}$, where $\mathcal{U}$ is an open convex subset of $\reals^3$.

Kinetic entropies $H_{\zeroVelocity}, H_{\xPlusVelocity}, H_{\yPlusVelocity}, H_{\xMinusVelocity}, H_{\yMinusVelocity}$ associated to the LBM scheme are chosen in order to satisfy the following properties.
\begin{enumerate}[label=\textbf{(E$\bm{_\arabic{*}}$)}, ref=\textbf{(E$\bm{_\arabic{*}}$)}]
    \item \label{Kinetic_Entropy_E1} For every $\vectorial{W} \in \mathcal{U}$
        \begin{equation*}
            \sum_{\indexVelocity\in \{\zeroVelocity, \xPlusVelocity, \yPlusVelocity, \xMinusVelocity, \yMinusVelocity\}} H_\indexVelocity(\vectorial{\distributionFunction}^\atEquilibrium_\indexVelocity(\vectorial{U})) = \eta(\vectorial{U}).
        \end{equation*}
    \item \label{Kinetic_Entropy_E2} For every $\vectorial{\distributionFunction}_\indexVelocity$ belonging to the set of equilibria corresponding to $\vectorial{W}\in\mathcal{U}$, with $\mathcal{U} \ni \vectorial{W} = \sum_\indexVelocity \vectorial{\distributionFunction}_\indexVelocity$, it holds
    \begin{equation*}
        \sum_{\indexVelocity\in \{\zeroVelocity, \xPlusVelocity, \yPlusVelocity, \xMinusVelocity, \yMinusVelocity\}} H_\indexVelocity(\vectorial{\distributionFunction}^\atEquilibrium_\indexVelocity(\vectorial{W})) \le \sum_{\indexVelocity\in \{\zeroVelocity, \xPlusVelocity, \yPlusVelocity, \xMinusVelocity, \yMinusVelocity\}} H_\indexVelocity(\vectorial{\distributionFunction}_\indexVelocity).
    \end{equation*}
\end{enumerate}
Let us note that the equilibria that we have selected are compatible in the sense of \cite[Theorem 2.1]{bouchut1999}, which ensures the existence of the kinetic entropies.

\begin{proposition}\label{prop:entropy}
Let $\eta(\vectorial{W})$ be a strictly convex macroscopic entropy function and consider the lattice Boltzmann scheme defined in \eqref{eq:LBM_scheme} with initial condition 
\begin{equation*}
    \vectorial{\distributionFunction}_{\indexVelocity}(0, \xLabel, \yLabel)
    =
    \vectorial{\distributionFunction}_{\indexVelocity}^{\atEquilibrium}(\vectorial{W}(0, \xLabel, \yLabel)),
\end{equation*}
and with all relaxation parameters equal to $\omega$.
Moreover, there exists a strictly convex kinetic entropy functional $\mathcal{H}(\vectorial{\distributionFunction}) := \sum_\indexVelocity H_\indexVelocity(\vectorial{\distributionFunction}_\indexVelocity)$, where $H_\indexVelocity(\vectorial{\distributionFunction}_{\indexVelocity})$ is a strictly convex function for $\vectorial{\distributionFunction}_{\indexVelocity}$ belonging to the set of equilibria corresponding to $\vectorial{W}\in\mathcal{U}$, for each velocity index $\indexVelocity$, satisfying \ref{Kinetic_Entropy_E1}-\ref{Kinetic_Entropy_E2}.\\
Then, if $\omega \in (0, 1]$, the solution of the scheme satisfies a local entropy inequality
  \begin{equation*}
    \mathcal{H}(\vectorial{\distributionFunction}(\timeVariable + \timeStep, \xLabel, \yLabel))
    \leq
    \mathcal{H}(\vectorial{\distributionFunction}(\timeVariable, \xLabel, \yLabel))
    -
    \frac{\timeStep}{\spaceStep}
    \bigl ( 
    \Psi_{\xLabel + \spaceStep, \yLabel}^{\collided}(\timeVariable)  -
    \Psi_{\xLabel - \spaceStep, \yLabel}^{\collided}(\timeVariable)    
    \bigr )
    -
    \frac{\timeStep}{\spaceStep}
    \bigl ( 
    \Psi_{\xLabel, \yLabel+\spaceStep}^{\collided}(\timeVariable)  -
    \Psi_{\xLabel, \yLabel-\spaceStep}^{\collided}(\timeVariable)    
    \bigr ),
  \end{equation*}
where the fluxes are given by
  \begin{multline}\label{eq:entropyFluxX}
    \Psi_{\xLabel + \spaceStep, \yLabel}^{\collided}(\timeVariable)
    :=
    \parabolicScaling
    \frac{H_{\xMinusVelocity}(\vectorial{\distributionFunction}_{\xMinusVelocity}^{\collided}(\timeVariable, \xLabel, \yLabel)) - H_{\xMinusVelocity}(\vectorial{\distributionFunction}_{\xMinusVelocity}^{\collided}(\timeVariable, \xLabel+\spaceStep, \yLabel))}{\spaceStep}, 
    \\ 
    \Psi_{\xLabel - \spaceStep, \yLabel}^{\collided}(\timeVariable)
    :=
    \parabolicScaling
    \frac{H_{\xPlusVelocity}(\vectorial{\distributionFunction}_{\xPlusVelocity}^{\collided}(\timeVariable, \xLabel-\spaceStep, \yLabel)) - H_{\xPlusVelocity}(\vectorial{\distributionFunction}_{\xPlusVelocity}^{\collided}(\timeVariable, \xLabel, \yLabel))}{\spaceStep}
  \end{multline}
  and
  \begin{multline}\label{eq:entropyFluxY}
    \Psi_{\xLabel , \yLabel + \spaceStep}^{\collided}(\timeVariable)
    :=
    \parabolicScaling
    \frac{H_{\yMinusVelocity}(\vectorial{\distributionFunction}_{\yMinusVelocity}^{\collided}(\timeVariable, \xLabel, \yLabel)) - H_{\yMinusVelocity}(\vectorial{\distributionFunction}_{\yMinusVelocity}^{\collided}(\timeVariable, \xLabel, \yLabel+\spaceStep))}{\spaceStep}, 
    \\ 
    \Psi_{\xLabel , \yLabel- \spaceStep}^{\collided}(\timeVariable)
    :=
    \parabolicScaling
    \frac{H_{\yPlusVelocity}(\vectorial{\distributionFunction}_{\yPlusVelocity}^{\collided}(\timeVariable, \xLabel, \yLabel-\spaceStep)) - H_{\yPlusVelocity}(\vectorial{\distributionFunction}_{\yPlusVelocity}^{\collided}(\timeVariable, \xLabel, \yLabel))}{\spaceStep}.
  \end{multline}
  Moreover, we have the following estimate on the decrease of the total entropy in the domain:
 \begin{equation}
 \label{eq:entropy_inequality}
     \sum_{(\xLabel,\yLabel)\in (\relatives+\frac{1}{2})^2}
    \eta (\vectorial{W}(\timeVariable, \xLabel, \yLabel))\leq 
    \sum_{(\xLabel,\yLabel)\in (\relatives+\frac{1}{2})^2}
    \eta (\vectorial{W}(0, \xLabel, \yLabel)).
 \end{equation}
 \end{proposition}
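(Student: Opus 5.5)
The argument splits each time step into its two phases: I show the relaxation does not increase $\mathcal{H}$ locally, and then write the transport as an exact rearrangement of entropy between neighbouring cells. Throughout I use the kinetic entropies $H_\indexVelocity$ given by \cite[Theorem 2.1]{bouchut1999}, which satisfy \ref{Kinetic_Entropy_E1}--\ref{Kinetic_Entropy_E2} and are convex on the relevant set.

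\emph{Step 1: relaxation.} With all relaxation parameters equal to $\omega$, the collision reads $\vectorial{\distributionFunction}^{\collided}_\indexVelocity=(1-\omega)\vectorial{\distributionFunction}_\indexVelocity+\omega\vectorial{\distributionFunction}^{\atEquilibrium}_\indexVelocity(\vectorial{W})$. For $\omega\in(0,1]$ this is a convex combination, so convexity of each $H_\indexVelocity$ gives
\[
\mathcal{H}(\vectorial{\distributionFunction}^{\collided})\le(1-\omega)\mathcal{H}(\vectorial{\distributionFunction})+\omega\,\mathcal{H}(\vectorial{\distributionFunction}^{\atEquilibrium}(\vectorial{W})).
\]
The collision conserves $\vectorial{W}$, because $\sum_\indexVelocity\vectorial{\distributionFunction}^{\atEquilibrium}_\indexVelocity(\vectorial{W})=\vectorial{W}$ (the $\spaceStep$ scaling of the momentum components is handled consistently). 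Hence \ref{Kinetic_Entropy_E2} yields $\mathcal{H}(\vectorial{\distributionFunction}^{\atEquilibrium}(\vectorial{W}))=\eta(\vectorial{W})\le\mathcal{H}(\vectorial{\distributionFunction})$, and therefore $\mathcal{H}(\vectorial{\distributionFunction}^{\collided})\le\mathcal{H}(\vectorial{\distributionFunction})$ pointwise.

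This step needs the intermediate states to stay in the domain where the $H_\indexVelocity$ are convex. That is where $\omega\le1$ is essential: with over-relaxation the state is extrapolated beyond the equilibrium and convexity gives nothing. I expect this domain issue to be the main obstacle. I would handle it by requiring the invariant region of the equilibria to be convex, so it is preserved by convex combinations, and by arguing inductively from the equilibrium initial data.

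\emph{Step 2: transport.} The transport is an exact shift, so
\[
\mathcal{H}(\vectorial{\distributionFunction}(\timeVariable+\timeStep,\xLabel,\yLabel))=H_{\zeroVelocity}(\vectorial{\distributionFunction}^{\collided}_{\zeroVelocity}(\xLabel,\yLabel))+H_{\xPlusVelocity}(\vectorial{\distributionFunction}^{\collided}_{\xPlusVelocity}(\xLabel-\spaceStep,\yLabel))+\dots
\]
I add and subtract $\mathcal{H}(\vectorial{\distributionFunction}^{\collided}(\xLabel,\yLabel))$. The remainder is then the sum of differences of the form $H_{\xPlusVelocity}(\vectorial{\distributionFunction}^{\collided}_{\xPlusVelocity}(\xLabel-\spaceStep))-H_{\xPlusVelocity}(\vectorial{\distributionFunction}^{\collided}_{\xPlusVelocity}(\xLabel))$ and the analogous ones for the other velocities. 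By the definitions \eqref{eq:entropyFluxX}--\eqref{eq:entropyFluxY} and $\timeStep\,\parabolicScaling/\spaceStep^2=1$, each such difference equals $\frac{\timeStep}{\spaceStep}\Psi^{\collided}$ with the signs in the statement. I only need to check the sign bookkeeping, that is, that the left- and right-moving contributions assemble into the stated differences. Combined with Step 1, this gives the local entropy inequality.

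\emph{Step 3: global estimate.} Summing the local inequality over the lattice, the transport terms telescope; equivalently, a shift does not change the lattice sum of $H_\indexVelocity$. This requires summable, for instance compactly supported, perturbations, with entropies normalized to vanish at the far-field state, or else a periodic setting. The result is $\sum\mathcal{H}(\vectorial{\distributionFunction}(\timeVariable+\timeStep))\le\sum\mathcal{H}(\vectorial{\distributionFunction}(\timeVariable))$, and iterating gives $\sum\mathcal{H}(\vectorial{\distributionFunction}(\timeVariable))\le\sum\mathcal{H}(\vectorial{\distributionFunction}(0))$.

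At time $0$ the data are at equilibrium, so \ref{Kinetic_Entropy_E1} gives $\mathcal{H}(\vectorial{\distributionFunction}(0))=\eta(\vectorial{W}(0))$. At time $\timeVariable$, \ref{Kinetic_Entropy_E2} together with \ref{Kinetic_Entropy_E1} gives $\eta(\vectorial{W}(\timeVariable))\le\mathcal{H}(\vectorial{\distributionFunction}(\timeVariable))$. Chaining these inequalities yields \eqref{eq:entropy_inequality}.
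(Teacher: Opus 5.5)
Your proof is correct and follows the paper's argument essentially step by step. The steps are: Jensen on the convex combination $(1-\omega)\vectorial{\distributionFunction}_\indexVelocity+\omega\vectorial{\distributionFunction}^{\atEquilibrium}_\indexVelocity$ together with \ref{Kinetic_Entropy_E2} for the collision; an exact rewriting of the shift as flux differences using $\parabolicScaling\timeStep/\spaceStep^2=1$; and a lattice sum, iteration, and \ref{Kinetic_Entropy_E1}--\ref{Kinetic_Entropy_E2} for the global bound. Your justification of the cancellation by shift-invariance of each lattice sum $\sum H_\indexVelocity$ is the right one, since the paper's labelled fluxes are not literally conservative across neighbouring cells. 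Your summability and convexity-domain caveats are likewise more careful than the paper, which leaves them implicit.
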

  
\begin{proof}
  Since $\omega \in (0, 1]$, the post-collision state $\vectorial{\distributionFunction}_\indexVelocity^{{\collided}}(\timeVariable, \xLabel, \yLabel)$ represents a strict convex combination of $\vectorial{f}_\indexVelocity (\timeVariable, \xLabel, \yLabel)$ and the equilibrium state $\vectorial{\distributionFunction}^{\atEquilibrium}_\indexVelocity(\vectorial{W}(\timeVariable, \xLabel, \yLabel))$. 
  Given that the kinetic entropy density function $H_\indexVelocity$ is strictly convex in its argument, we apply Jensen's inequality and obtain
  \begin{equation*}
      H_\indexVelocity(\vectorial{\distributionFunction}_\indexVelocity^{\collided}(\timeVariable, \xLabel, \yLabel)) \le (1-\omega) H_\indexVelocity(\vectorial{\distributionFunction}_\indexVelocity (\timeVariable, \xLabel, \yLabel)) + \omega H(\vectorial{\distributionFunction}^{\atEquilibrium}_\indexVelocity(\vectorial{W}(\timeVariable, \xLabel, \yLabel))).
  \end{equation*}
  Summing this relation over the velocity space yields the inequality for the total local kinetic entropy
  \begin{equation*}
      \mathcal{H}(\vectorial{\distributionFunction}^{\collided}(\timeVariable, \xLabel, \yLabel)) \le (1-\omega) \mathcal{H}(\vectorial{\distributionFunction}(\timeVariable, \xLabel, \yLabel)) + \omega \mathcal{H}(\vectorial{\distributionFunction}^{\atEquilibrium}(\vectorial{W}(\timeVariable, \xLabel, \yLabel))).
  \end{equation*}
  Since the equilibrium is a minimizer for the entropy $\mathcal{H}$ among all distributions possessing the same macroscopic moments, see \ref{Kinetic_Entropy_E2}, we obtain
  \begin{equation}\label{eq:entropyDimCollision}
      \mathcal{H}(\vectorial{\distributionFunction}^{\collided}(\timeVariable, \xLabel, \yLabel)) \le  \mathcal{H}(\vectorial{\distributionFunction}(\timeVariable, \xLabel, \yLabel)).
  \end{equation}
  This proves that under-relaxation $\omega \leq 1$ guarantees that the total kinetic entropy is decreasing through the relaxation phase. The streaming phase represents a shift of $\vectorial{\distributionFunction}^{\collided}$ along the discrete lattice. 
  In particular, we have 
  \begin{align*}
    \mathcal{H}(\vectorial{\distributionFunction}(\timeVariable + \timeStep, \xLabel, \yLabel))
    =
    H_{\zeroVelocity}(\vectorial{\distributionFunction}_{\zeroVelocity}^{\collided}(\timeVariable, \xLabel, \yLabel))
    &+
    H_{\xPlusVelocity}(\vectorial{\distributionFunction}_{\xPlusVelocity}^{\collided}(\timeVariable, \xLabel-\spaceStep, \yLabel))
    +
    H_{\xMinusVelocity}(\vectorial{\distributionFunction}_{\xMinusVelocity}^{\collided}(\timeVariable, \xLabel+\spaceStep, \yLabel))\\
    &+
    H_{\yPlusVelocity}(\vectorial{\distributionFunction}_{\yPlusVelocity}^{\collided}(\timeVariable, \xLabel, \yLabel-\spaceStep))
    +
    H_{\yMinusVelocity}(\vectorial{\distributionFunction}_{\yMinusVelocity}^{\collided}(\timeVariable, \xLabel, \yLabel+\spaceStep))
.
  \end{align*}
  Straightforward manipulations turn the previous inequality into
  \begin{align*}
    \mathcal{H}(\vectorial{\distributionFunction}(\timeVariable + \timeStep, \xLabel, \yLabel))
    &=
    \mathcal{H}(\vectorial{\distributionFunction}^{\collided}(\timeVariable, \xLabel, \yLabel))
    -
    \frac{\timeStep}{\spaceStep}
    \bigl ( 
    \Psi_{\xLabel + \spaceStep, \yLabel}^{\collided}(\timeVariable)  -
    \Psi_{\xLabel - \spaceStep, \yLabel}^{\collided}(\timeVariable)    
    \bigr )
    -
    \frac{\timeStep}{\spaceStep}
    \bigl ( 
    \Psi_{\xLabel, \yLabel+\spaceStep}^{\collided}(\timeVariable)  -
    \Psi_{\xLabel, \yLabel-\spaceStep}^{\collided}(\timeVariable)    
    \bigr )\\
    &\leq
    \mathcal{H}(\vectorial{\distributionFunction}(\timeVariable, \xLabel, \yLabel))
    -
    \frac{\timeStep}{\spaceStep}
    \bigl ( 
    \Psi_{\xLabel + \spaceStep, \yLabel}^{\collided}(\timeVariable)  -
    \Psi_{\xLabel - \spaceStep, \yLabel}^{\collided}(\timeVariable)    
    \bigr )
    -
    \frac{\timeStep}{\spaceStep}
    \bigl ( 
    \Psi_{\xLabel, \yLabel+\spaceStep}^{\collided}(\timeVariable)  -
    \Psi_{\xLabel, \yLabel-\spaceStep}^{\collided}(\timeVariable)    
    \bigr ),
  \end{align*}
  where the inequality comes from \eqref{eq:entropyDimCollision}, and the fluxes are defined by \eqref{eq:entropyFluxX} and \eqref{eq:entropyFluxY}.
  Summing in $\xLabel$ and $\yLabel$ over the discrete mesh, we obtain 
\begin{align*}
    \sum_{(\xLabel,\yLabel)\in (\relatives+\frac{1}{2})^2}
    \mathcal{H}(\vectorial{\distributionFunction}(\timeVariable, \xLabel, \yLabel))
    &\leq 
     \sum_{(\xLabel,\yLabel)\in (\relatives+\frac{1}{2})^2}
    \mathcal{H}(\vectorial{\distributionFunction}(\timeVariable - \timeStep, \xLabel, \yLabel))
    \leq \dots \leq 
    \sum_{(\xLabel,\yLabel)\in (\relatives+\frac{1}{2})^2}
    \mathcal{H}(\vectorial{\distributionFunction}(0, \xLabel, \yLabel))\\
    &= 
    \sum_{(\xLabel,\yLabel)\in (\relatives+\frac{1}{2})^2}
    \mathcal{H}(\vectorial{\distributionFunction}^{\atEquilibrium}(\vectorial{W}(0, \xLabel, \yLabel)))
    =
    \sum_{(\xLabel,\yLabel)\in (\relatives+\frac{1}{2})^2}
    \eta (\vectorial{W}(0, \xLabel, \yLabel)),
  \end{align*}
  where the first inequality comes from the conservativity of the fluxes that yields cancellations.
  We then iterate across time steps, use initialization at equilibrium, and finally \ref{Kinetic_Entropy_E1}.
  This gives, thanks to \ref{Kinetic_Entropy_E2}, that for $\timeVariable\in\timeStep\naturals$
  \begin{equation*}
    \sum_{(\xLabel,\yLabel)\in (\relatives+\frac{1}{2})^2}
    \eta (\vectorial{W}(\timeVariable, \xLabel, \yLabel))\leq 
    \sum_{(\xLabel,\yLabel)\in (\relatives+\frac{1}{2})^2}
    \eta (\vectorial{W}(0, \xLabel, \yLabel)).
  \end{equation*}

\end{proof}

\section{A partial study of the spectra for the linearized scheme}\label{sec:spectra}

We now draw some partial conclusions from spectra when linearizing the numerical scheme.
This partial character is due to the large number of eigenvalues when not relaxing on the equilibrium.

Consider now the linear pressure law $\pressure(\density) = \density$.
We linearize the equilibria, thus the entire scheme, about a reference state $(\referenceState{\density}, \referenceState{\momentumX}, \referenceState{\momentumY})$.
Moreover, one considers the Fourier transform in space, denoted by a hat, giving 
\begin{equation*}
    \fourierTransformed{\vectorial{\distributionFunction}}
    (\timeVariable+\timeStep, \frequency_{\xLabel}, \frequency_{\yLabel})
    =
    \vectorial{E}_{(\referenceState{\density}, \referenceState{\momentumX}, \referenceState{\momentumY})}
    (\frequency_{\xLabel}\spaceStep, \frequency_{\yLabel}\spaceStep)
    \fourierTransformed{\vectorial{\distributionFunction}}
    (\timeVariable, \frequency_{\xLabel}, \frequency_{\yLabel})
\end{equation*}
where the distribution functions have been collected in a vector, so that $\vectorial{E}_{(\referenceState{\density}, \referenceState{\momentumX}, \referenceState{\momentumY})}\in\mathcal{M}_{15}(\mathbb{C})$, and $(\frequency_{\xLabel}\spaceStep, \frequency_{\yLabel}\spaceStep)\in[-\pi, \pi]^2$.

\subsection{Checkerboard mode}
We first recover a necessary stability condition by analyzing the checkerboard mode $(\pi, \pi)$.
\begin{lemma}[Necessary stability condition]\label{lemma:spectrumPiPi}
    Necessary and sufficient conditions so that the spectrum of $\vectorial{E}_{(\referenceState{\density}, \referenceState{\momentumX}, \referenceState{\momentumY})}(\pi, \pi)$ belongs to the closed unit disk are that 
    \begin{equation*}
        \relaxationParameter_{\density}, \relaxationParameter_{\momentumX}, \relaxationParameter_{\momentumY} \in [0, 2]
        \qquad \text{and}\qquad 
        \linearEquilibrium_{\density}, \linearEquilibrium_{\momentumX}, \linearEquilibrium_{\momentumY} \in [0, \tfrac{1}{4}].
    \end{equation*}
\end{lemma}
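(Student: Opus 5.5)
The plan is to exploit the fact that at the checkerboard frequency $(\frequency_{\xLabel}\spaceStep, \frequency_{\yLabel}\spaceStep) = (\pi,\pi)$ every shift $e^{\pm i\pi}$ equals $-1$. The transport step then acts on each of the three components (for $\density$, $\momentumX$, $\momentumY$) simply as multiplication by $\diagonalMatrix(1,-1,-1,-1,-1)$ on $(\distributionFunction_{\zeroVelocity},\distributionFunction_{\xPlusVelocity},\distributionFunction_{\xMinusVelocity},\distributionFunction_{\yPlusVelocity},\distributionFunction_{\yMinusVelocity})$.

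\textbf{Change of basis.} As in the proof of \Cref{prop:Consistency}, I will write each component in the moment variables $(\distributionFunction_{\zeroVelocity}, \antiSymmetricMoment_{\xLabel}, \symmetricMoment_{\xLabel}, \antiSymmetricMoment_{\yLabel}, \symmetricMoment_{\yLabel})$. In these variables transport fixes $\distributionFunction_{\zeroVelocity}$ and flips the sign of the four others. The conserved moment is $\distributionFunction_{\zeroVelocity}+\symmetricMoment_{\xLabel}+\symmetricMoment_{\yLabel}$, up to the factor $\spaceStep$ for the momenta.

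\textbf{Block-triangular structure.} After linearization about $(\referenceState{\density}, \referenceState{\momentumX}, \referenceState{\momentumY})$, all dependence on the reference state, on the pressure, and on the coupling between components enters only through the equilibria of the antisymmetric moments $\antiSymmetricMoment_{\xLabel}, \antiSymmetricMoment_{\yLabel}$. By contrast, the equilibria of $\distributionFunction_{\zeroVelocity}$, $\symmetricMoment_{\xLabel}$ and $\symmetricMoment_{\yLabel}$ for a given component depend only, and linearly, on that component's own conserved moment. Moreover, the antisymmetric moments never feed back into the conserved or symmetric ones. Ordering the unknowns as (symmetric part of each component, then antisymmetric moments), $\vectorial{E}(\pi,\pi)$ is block lower triangular. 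The six antisymmetric moments contribute eigenvalues $-(1-\relaxationParameter)$ with the relevant $\relaxationParameter$. The reference state thus disappears from the spectrum, which explains why the conditions are independent of it.

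\textbf{Symmetric block of one component.} Drop the subscripts and set $Q=\distributionFunction_{\zeroVelocity}+\symmetricMoment_{\xLabel}+\symmetricMoment_{\yLabel}$. The map reads
\begin{align*}
\distributionFunction_{\zeroVelocity} &\mapsto (1-\relaxationParameter)\distributionFunction_{\zeroVelocity}+\relaxationParameter(1-4\linearEquilibrium)Q,\\
\symmetricMoment_{\xLabel} &\mapsto -\bigl((1-\relaxationParameter)\symmetricMoment_{\xLabel}+2\relaxationParameter\linearEquilibrium Q\bigr),
\end{align*}
and likewise for $\symmetricMoment_{\yLabel}$. The difference $\symmetricMoment_{\xLabel}-\symmetricMoment_{\yLabel}$ is an eigenvector direction with eigenvalue $-(1-\relaxationParameter)$. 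On $(\distributionFunction_{\zeroVelocity}, S=\symmetricMoment_{\xLabel}+\symmetricMoment_{\yLabel})$ we get the $2\times 2$ matrix
\begin{equation*}
\begin{pmatrix} 1-4\relaxationParameter\linearEquilibrium & \relaxationParameter(1-4\linearEquilibrium)\\ -4\relaxationParameter\linearEquilibrium & -(1-\relaxationParameter)-4\relaxationParameter\linearEquilibrium\end{pmatrix},
\end{equation*}
whose trace is $\relaxationParameter(1-8\linearEquilibrium)$ and whose determinant simplifies to $\relaxationParameter-1$. The characteristic polynomial is therefore $\lambda^2-\relaxationParameter(1-8\linearEquilibrium)\lambda+(\relaxationParameter-1)$.

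\textbf{Stability conditions.} I then apply the Schur--Cohn/Jury criterion for real quadratics $\lambda^2+b\lambda+c$: both roots lie in the closed unit disk if and only if $|c|\le 1$ and $|b|\le 1+c$. The first condition gives $\relaxationParameter\in[0,2]$. The second gives $\relaxationParameter|1-8\linearEquilibrium|\le\relaxationParameter$, that is, $\linearEquilibrium\in[0,\tfrac14]$ when $\relaxationParameter>0$, which is the standing assumption on the relaxation parameters. The remaining eigenvalues $-(1-\relaxationParameter)$ require exactly $\relaxationParameter\in[0,2]$, which is already implied. Repeating this for the three components yields necessity and sufficiency.

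\textbf{Main obstacle.} I expect the delicate step to be checking the block-triangular claim carefully for the linearized nonlinear terms $\frac{\momentumX^2}{\density}$, $\frac{\momentumX\momentumY}{\density}$ and $\pressure(\density)$, as well as the density's $\pm\frac{\spaceStep}{2\parabolicScaling}\momentumX$ terms. All of these sit in antisymmetric combinations, and I must confirm that they truly never enter the symmetric or conserved rows. A secondary point is the boundary of the Jury criterion: double roots on the unit circle, for instance at $\relaxationParameter=2$, must still be counted as admissible under the "spectrum in the closed disk" formulation.
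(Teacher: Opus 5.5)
Your proposal is correct and follows the paper's route. Your block-triangular reduction in the moment variables reproduces exactly the paper's factorization $\prod_{i}(z-\relaxationParameter_i+1)^3\bigl(z^2-\relaxationParameter_i(1-8\linearEquilibrium_i)z+\relaxationParameter_i-1\bigr)$, which the paper only asserts ("computations yield"), and your closed-disk Schur--Cohn/Jury test for the quadratic is what the cited recursive procedure of Strikwerda reduces to.

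Your caveat that $\linearEquilibrium\in[0,\tfrac14]$ is forced only when $\relaxationParameter>0$ is a correct refinement that the lemma's statement glosses over. At $\relaxationParameter_i=0$ the quadratic is $z^2-1$ for every $\linearEquilibrium_i$.
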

\begin{proof}
    Computations yield that 
    \begin{equation*}
        \textnormal{det}(z\vectorial{I}-\vectorial{E}_{(\referenceState{\density}, \referenceState{\momentumX}, \referenceState{\momentumY})}(\pi, \pi))
        =
        \prod_{i \in\{ \density, \momentumX, \momentumY\}}
        (z-\relaxationParameter_i+1)^3
        (z^2-\relaxationParameter_i(1-8\linearEquilibrium_i)z + \relaxationParameter_i - 1),
    \end{equation*}
    hence the condition follows from the recurrent procedure exposed in \cite[Chapter 4]{strikwerda2004finite}.
\end{proof}

\begin{remark}[Role of $\spaceStep$ in the spectrum]
    The linearized equilibria depend on $\spaceStep$.
    By numerically computing the spectrum of $\vectorial{E}_{(\referenceState{\density}, \referenceState{\momentumX}, \referenceState{\momentumY})}
    (\frequency_{\xLabel}\spaceStep, \frequency_{\yLabel}\spaceStep)$, we see that it stabilizes for $\spaceStep\to 0$.
    For this reason, we present results with $\spaceStep=10^{-6}$.
\end{remark}

\subsection{Role of the space-time scaling $\parabolicScaling$}

\begin{figure}
    \begin{center}
        \includegraphics[width=1\textwidth]{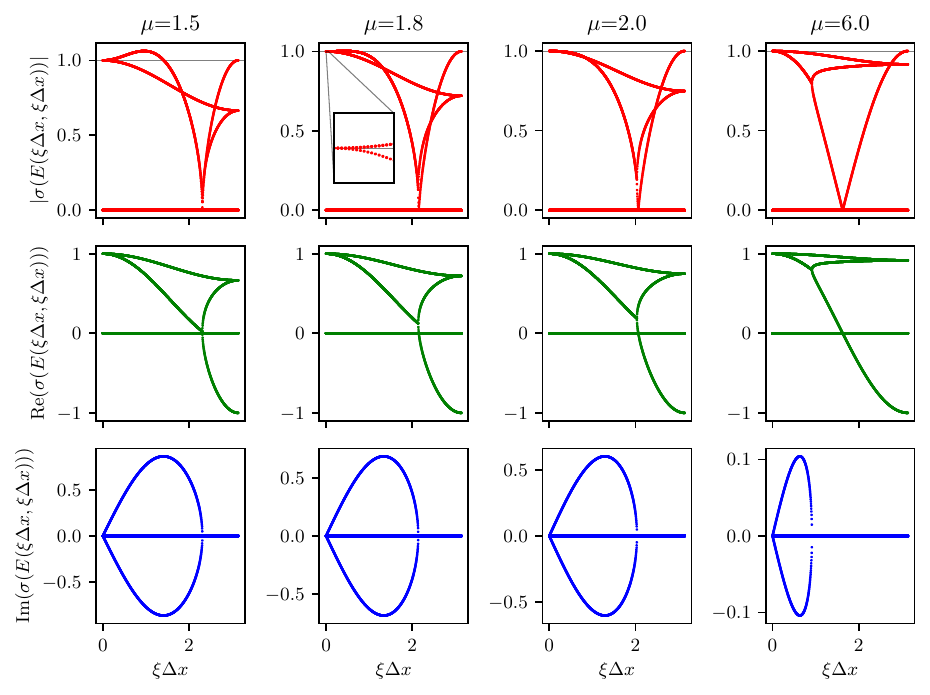}
    \end{center}
    \caption{\label{fig:roleMu}Modulus, real and imaginary part of the eigenvalues of $\vectorial{E}_{(\referenceState{\density}, \referenceState{\momentumX}, \referenceState{\momentumY})}
    (\frequency\spaceStep, \frequency\spaceStep)$, that is ``along a diagonal'' in the frequency space.}
\end{figure}

Let us start by a numerical illustration, in which we linearize around $(\referenceState{\density}, \referenceState{\momentumX}, \referenceState{\momentumY}) = (1, 1, 1)$.
We consider the relaxation scheme with $\relaxationParameter_{\density} = \relaxationParameter_{\momentumX} = \relaxationParameter_{\momentumY} = 1$.
Moreover, we select $\linearEquilibrium_{\density}=\tfrac{1}{4}$, and 
\begin{equation*}
    \viscosity = \frac{\pi}{50}, 
    \qquad \text{thus}
    \qquad 
    \linearEquilibrium_{\momentumX}
    =
    \linearEquilibrium_{\momentumY}
    =
    \frac{\viscosity}{\parabolicScaling},
\end{equation*}
to achieve the viscosity.
Modulii of spectra while varying $\parabolicScaling$ are shown in \Cref{fig:roleMu}.
We see that a minimum $\parabolicScaling$ to achieve stability is needed, as it ensures that one of the three present modes dissipates ``in the right direction'' in the low frequency limit.
Moreover, we see that the fact of having taken $\linearEquilibrium_{\density}=\tfrac{1}{4}$ yields---as visible from \Cref{lemma:spectrumPiPi}---a lack of damping of the checkerboard mode $(\pi, \pi)$, which can be dangerous in the non-linear setting.

In order to avoid dealing with an equation of order 15, we consider the relaxation setting $\relaxationParameter_{\density} = \relaxationParameter_{\momentumX} = \relaxationParameter_{\momentumY} = 1$.
We want to find a necessary stability condition in terms of $\parabolicScaling$ by requesting that eigenvalues to not grow in modulus above one close to the zero frequency.
We conjecture, based on numerical evidence, that the condition be slightly different for out-of-equilibrium schemes.
\begin{lemma}
    Consider the linearized relaxation scheme ($\relaxationParameter_{\density} = \relaxationParameter_{\momentumX} = \relaxationParameter_{\momentumY} = 1$). 
    Then, a necessary stability condition is that 
    \begin{equation*}
    \parabolicScaling
    \geq 
    (\linearEquilibrium_{\density} + \linearEquilibrium_{\momentumX}  +\linearEquilibrium_{\momentumY})^{-1/2}.
\end{equation*}
\end{lemma}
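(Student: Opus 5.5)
Since $\relaxationParameter_{\density}=\relaxationParameter_{\momentumX}=\relaxationParameter_{\momentumY}=1$, every distribution function is projected onto the equilibrium at each step. I will therefore collapse the $15\times 15$ matrix $\vectorial{E}_{(\referenceState{\density}, \referenceState{\momentumX}, \referenceState{\momentumY})}$ to a $3\times 3$ amplification matrix acting on the conserved moments $\vectorial{W}=(\density,\spaceStep\momentumX,\spaceStep\momentumY)$. Concretely, $\vectorial{W}(\timeVariable+\timeStep)=\sum_{\indexVelocity}(\text{shift}_{\indexVelocity})\,\vectorial{\distributionFunction}^{\atEquilibrium}_{\indexVelocity}(\vectorial{W}(\timeVariable))$. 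After linearization and Fourier transform, this gives $\fourierTransformed{\vectorial{W}}(\timeVariable+\timeStep)=\vectorial{G}(\theta_{\xLabel},\theta_{\yLabel})\fourierTransformed{\vectorial{W}}(\timeVariable)$ with $\theta_{\xLabel}=\frequency_{\xLabel}\spaceStep$ and $\theta_{\yLabel}=\frequency_{\yLabel}\spaceStep$. The nonzero eigenvalues of $\vectorial{E}$ are exactly those of $\vectorial{G}$; the remaining ones vanish because the collision is a rank-$3$ projection. Hence a necessary stability condition is that the spectrum of $\vectorial{G}$ lies in the closed unit disk, and it suffices to show that this fails near the zero frequency when $\parabolicScaling^2<(\linearEquilibrium_{\density}+\linearEquilibrium_{\momentumX}+\linearEquilibrium_{\momentumY})^{-1}$.

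I will write $\vectorial{G}$ explicitly using $\pressure'(\referenceState{\density})=1$. For the density row,
$G_{11}=1-4\linearEquilibrium_{\density}+2\linearEquilibrium_{\density}(\cos\theta_{\xLabel}+\cos\theta_{\yLabel})$, and the entries multiplying $\spaceStep\momentumX$ and $\spaceStep\momentumY$ are $-\tfrac{i}{\parabolicScaling}\sin\theta_{\xLabel}$ and $-\tfrac{i}{\parabolicScaling}\sin\theta_{\yLabel}$. For the $\spaceStep\momentumX$ row, the diagonal entry is $1-4\linearEquilibrium_{\momentumX}+2\linearEquilibrium_{\momentumX}(\cos\theta_{\xLabel}+\cos\theta_{\yLabel})$ and the coupling to $\density$ is $-\tfrac{i}{\parabolicScaling}\sin\theta_{\xLabel}$. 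On top of these come the linearized convective contributions $\tfrac{\spaceStep^2}{2\parabolicScaling}\momentumX^2/\density$ and similar terms. Once expressed in the variables $\spaceStep\momentum$, these carry an extra factor $\spaceStep$, so they disappear in the regime $\spaceStep\to 0$ adopted in the previous remark. The $\spaceStep\momentumY$ row is symmetric to the $\spaceStep\momentumX$ row.

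Rather than tracking the three eigenvalues separately, I will use the determinant: if all eigenvalues lie in the closed unit disk, then $|\det\vectorial{G}|\le 1$. I take the diagonal direction $\theta_{\xLabel}=\theta_{\yLabel}=\theta\to 0$ and expand to second order. The diagonal product contributes $1-2\theta^2(\linearEquilibrium_{\density}+\linearEquilibrium_{\momentumX}+\linearEquilibrium_{\momentumY})$. The off-diagonal couplings contribute, through the terms $-G_{12}G_{21}-G_{13}G_{31}$, the amount $-(-\tfrac{i}{\parabolicScaling})^2(\theta_{\xLabel}^2+\theta_{\yLabel}^2)=\tfrac{2\theta^2}{\parabolicScaling^2}$. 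Altogether,
\begin{equation*}
\det\vectorial{G}(\theta,\theta)=1-2\theta^2\Bigl(\linearEquilibrium_{\density}+\linearEquilibrium_{\momentumX}+\linearEquilibrium_{\momentumY}-\frac{1}{\parabolicScaling^2}\Bigr)+\BigO{\theta^3}.
\end{equation*}
This determinant is real at this order. Requiring $\det\vectorial{G}\le 1$ for small $\theta$ forces $\linearEquilibrium_{\density}+\linearEquilibrium_{\momentumX}+\linearEquilibrium_{\momentumY}\ge\parabolicScaling^{-2}$, which is the claimed bound.

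The main obstacle is justifying that the dropped convective terms truly do not enter at order $\theta^2$ in the determinant. One also has to be careful that the imaginary $\BigO{\theta}$ parts do not combine into an additional real $\theta^2$ contribution. I would handle this by doing the expansion with the reference state $(\referenceState{\momentumX},\referenceState{\momentumY})$ kept explicit. Any such term then carries a factor $\spaceStep$ relative to those retained, and therefore vanishes in the limit $\spaceStep\to 0$, in agreement with the remark on the role of $\spaceStep$ in the spectrum. In the equality case, higher-order terms would have to be examined, but this does not matter for a necessary condition stated with $\geq$.
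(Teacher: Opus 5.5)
Your proof is correct, and it reaches the bound by a genuinely different route from the paper's.

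\textbf{What the paper does.} It computes the characteristic polynomial of the full $15\times15$ matrix along the diagonal and truncates its coefficients at second order in $\theta=\frequency\spaceStep$. It reads off $(z-1)z^{12}$ times a quadratic, follows the two acoustic roots $1\pm i\frac{\sqrt2}{\parabolicScaling}\theta+z^{(2)}\theta^2$, and imposes $|z|\le 1$ on each of them.

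\textbf{What you do instead.} You explain the factor $z^{12}$ structurally: the collision is a rank-$3$ projection, so $\vectorial{E}$ and the $3\times3$ matrix $\vectorial{G}$ share their nonzero spectrum. You then use only the product of the three eigenvalues. With $\spaceStep=0$ in the coefficients, exactly as in the paper, one has $\det\vectorial{G}=G_{11}G_{22}G_{33}+\parabolicScaling^{-2}(\sin^2\theta_{\xLabel}\,G_{33}+\sin^2\theta_{\yLabel}\,G_{22})$. This is exactly real for every $(\theta_{\xLabel},\theta_{\yLabel})$, so your worry about imaginary parts recombining is moot, and the same bound comes out in every direction.

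\textbf{What your route buys.} The main gain is robustness. Your expansion involves only the constant coefficient of the cubic, so a second-order Taylor expansion is fully legitimate. By contrast, at the triple root $z=1$, the $\theta^2$-corrections of individual roots depend on $\BigO{\theta^4}$ terms that a second-order truncation of the coefficients discards. Along the diagonal, the exact cubic is $(z-G_{11})(z-G_{22})(z-G_{33})+\parabolicScaling^{-2}\sin^2\theta\,(2z-G_{22}-G_{33})$. The neglected piece $\parabolicScaling^{-2}\sin^2\theta\,(2-G_{22}-G_{33})=\BigO{\theta^4}$ has two effects:
\begin{itemize}
\item it moves the non-propagating root from $1$ to $1-(\linearEquilibrium_{\momentumX}+\linearEquilibrium_{\momentumY})\theta^2$;
\item it changes the acoustic damping coefficient to $\linearEquilibrium_{\density}+\tfrac12(\linearEquilibrium_{\momentumX}+\linearEquilibrium_{\momentumY})$.
\end{itemize}
The product of the roots is left untouched. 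Your determinant does not see how damping is shared among the modes, so it gives exactly the stated (non-sharp) bound with no higher-order work.

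\textbf{What it costs.} The determinant cannot exhibit the acoustic speed, identify which mode goes unstable, or detect the sharper condition $\parabolicScaling^{2}\bigl(\linearEquilibrium_{\density}+\tfrac12(\linearEquilibrium_{\momentumX}+\linearEquilibrium_{\momentumY})\bigr)\ge 1$ that the acoustic pair actually imposes along the diagonal.
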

\begin{proof}
Computations give
\begin{multline*}
    \textnormal{det}(z-\vectorial{E}_{(\referenceState{\density}, \referenceState{\momentumX}, \referenceState{\momentumY})}
    (\frequency\spaceStep, \frequency\spaceStep)) 
    = 
    \bigl (2  {\linearEquilibrium_{\density}}  (\frequency\spaceStep)^{2} z + 2  {\linearEquilibrium_{\momentumX}}  (\frequency\spaceStep)^{2} z + 2  {\linearEquilibrium_{\momentumY}}  (\frequency\spaceStep)^{2} z \\
    - 2  {\linearEquilibrium_{\density}}  (\frequency\spaceStep)^{2} - 2  {\linearEquilibrium_{\momentumX}}  (\frequency\spaceStep)^{2} - 2  {\linearEquilibrium_{\momentumY}}  (\frequency\spaceStep)^{2} +  z^{2} - 2   z +  2/\parabolicScaling^{2}  (\frequency\spaceStep)^{2} \bigr ) {\left(z - 1\right)} z^{12}
    +\BigO{(\frequency\spaceStep)^3},
\end{multline*}    
where we have performed a second-order Taylor expansion in the coefficients in the limit of $\frequency\spaceStep\to 0$, and we have set $\spaceStep = 0$ in the coefficients.
As expected, three eigenvalues equal one at leading-order in $\frequency\spaceStep$. 
One of these does not represent a propagating mode.
We now follow the remaining two: taking $z = 1+(\frequency\spaceStep) z^{(1)} + \BigO{(\frequency\spaceStep)^2}$, and truncating---yields the quadratic equation $\parabolicScaling^2 (z^{(1)})^2 +2 = 0$, hence $z^{(1)} = \pm i \frac{\sqrt{2}}{\parabolicScaling}$.
This means 
\begin{equation*}
    z = 1\pm i \frac{\sqrt{2}}{\parabolicScaling} (\frequency\spaceStep) +\BigO{(\frequency\spaceStep)^2}
\end{equation*}
and thus low spatial frequencies for these two waves propagate.
Going further and taking $z = 1 \pm i \frac{\sqrt{2}}{\parabolicScaling} (\frequency\spaceStep) + (\frequency\spaceStep)^2 z^{(2)} + \BigO{(\frequency\spaceStep)^3}$ gives 
\begin{equation*}
    z = 1\pm i \frac{\sqrt{2}}{\parabolicScaling} (\frequency\spaceStep) 
    -(\linearEquilibrium_{\density} + \linearEquilibrium_{\momentumX}  +\linearEquilibrium_{\momentumY})(\frequency\spaceStep)^2
    +\BigO{(\frequency\spaceStep)^3}.
\end{equation*}
We are now in position to request that these two modes ``dissipate'' in the low spatial frequency limit. 
Indeed 
\begin{align*}
    |z|^{2}
    &=
    (1-(\linearEquilibrium_{\density} + \linearEquilibrium_{\momentumX}  +\linearEquilibrium_{\momentumY})(\frequency\spaceStep)^2 + \BigO{(\frequency\spaceStep)^4})^2
    + (\pm \frac{\sqrt{2}}{\parabolicScaling} (\frequency\spaceStep) +  \BigO{(\frequency\spaceStep)^3})^2\\
    &=
    1 
    - 2\Bigl( 
    \linearEquilibrium_{\density} + \linearEquilibrium_{\momentumX}  +\linearEquilibrium_{\momentumY} - \frac{1}{\parabolicScaling^2}    
    \Bigr ) (\frequency\spaceStep)^2 + 
    \BigO{(\frequency\spaceStep)^4},
\end{align*}
which gives $\parabolicScaling
    >
    (\linearEquilibrium_{\density} + \linearEquilibrium_{\momentumX}  +\linearEquilibrium_{\momentumY})^{-1/2}$.
    More generally, if we look for the spectrum where along the direction $(\cos(\beta), \sin(\beta))$ in the frequency space, we obtain 
    \begin{equation*}
    z = 1\pm i \frac{1}{\parabolicScaling} (\frequency\spaceStep) 
    -\frac{1}{2}(\linearEquilibrium_{\density} + \linearEquilibrium_{\momentumX}  +\linearEquilibrium_{\momentumY})(\frequency\spaceStep)^2
    +\BigO{(\frequency\spaceStep)^3},
\end{equation*}
which means that acoustic waves propagate (quickly) at velocity $1/\spaceStep$ and gives an analogous result.
\end{proof}

\section{Numerical experiments}\label{sec:numericalExp}

We now gather numerical experiments to validate the proposed approach and to study how to select the parameters in the numerical scheme.
Let us stress that we consider simulations with relaxation parameters in $[1, 2)$, which generally ensure better performances.
However, one is able to straightforwardly prove discrete entropy inequalities, see \Cref{prop:entropy}, only when relaxation parameters belong to $(0, 1]$.

\subsection{Taylor-Green vortex}

We consider precisely the setting of Section 5 in \cite{carfora2008discrete} with a Taylor-Green vortex. 
In this case, we fix $\parabolicScaling = 8$.

\subsubsection{Dissipation on the density $\density$}

\begin{figure}
    \begin{center}
        \includegraphics[width=.5\textwidth]{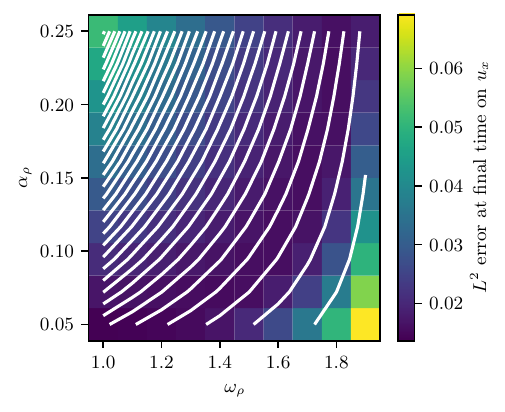}
    \end{center}
    \caption{\label{fig:taylorGreen-error-dissipation-rho}Error at final time for the Taylor-Green vortex test as function of $\relaxationParameter_{\density}$ and $\linearEquilibrium_{\density}$.
    White lines: contours of the function $\linearEquilibrium_{\density}
         ( 
            \frac{1}{\relaxationParameter_{\density}}-\frac{1}{2}
         )$.}
\end{figure}

We have seen from \Cref{prop:Consistency} that the density field $\density$ is diffused, with diffusion coefficient 
\begin{equation*}
    \text{proportional to}\quad  
     \linearEquilibrium_{\density}
        \Bigl ( 
            \frac{1}{\relaxationParameter_{\density}}-\frac{1}{2}
        \Bigr ).
\end{equation*} 
The question is: how does the choice of diffusion on $\density$, depending on $\linearEquilibrium_{\density}$ and $\relaxationParameter_{\density}$, relate to the accuracy of the numerical solution?

To provide a possible answer to this question, we simulate with 75 cells-per-direction, and measure $L^2$ errors on the velocity along $\xLabel$ at final time equal to one.
In this case, we use $\relaxationParameter_{\momentumX} = \relaxationParameter_{\momentumY} = 1$ (relaxation scheme for the two momentums), and obtain the requested the viscosity using $\linearEquilibrium_{\momentumX} = \linearEquilibrium_{\momentumY}$.
Results are collected in \Cref{fig:taylorGreen-error-dissipation-rho}.
We see that better results can be achieved by reducing the diffusion on $\density$, of course taking into account the lack of numerical stability when this feature is exacerbated (see for example the lower-right corner).
There is a whole curve---not far from being an isoline of the diffusivity---in the $(\relaxationParameter_{\density}, \linearEquilibrium_{\density})$-plane where  minimal error can be achieved, and this curve includes the relaxation scheme $\relaxationParameter_{\density} = 1$ with $\linearEquilibrium_{\density}$ far away from $\tfrac{1}{4}$, which was the value considered in \cite{carfora2008discrete}.

\subsubsection{Relaxation parameters for the momentums}

\begin{figure}
    \begin{center}
        \includegraphics[width=.5\textwidth]{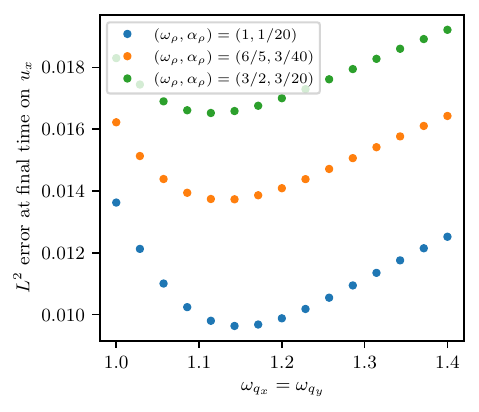}
    \end{center}
    \caption{\label{fig:taylorGreen-error-omega_q}Error at final time for the Taylor-Green vortex test as function of $\relaxationParameter_{\momentumX} = \relaxationParameter_{\momentumY}$.}
\end{figure}

We now consider the following values for $(\relaxationParameter_{\density}, \linearEquilibrium_{\density})$ that lay on the same isoline of $\linearEquilibrium_{\density}
        \Bigl ( 
            \frac{1}{\relaxationParameter_{\density}}-\frac{1}{2}
        \Bigr )$ and which roughly minimize the error in \Cref{fig:taylorGreen-error-dissipation-rho}:
\begin{equation}\label{eq:params}
    (\relaxationParameter_{\density}, \linearEquilibrium_{\density}) 
    \in
    \{
    (1, 1/20), 
    (6/5, 3/40), 
    (3/2, 3/20)  
    \}.
\end{equation}
For these values, we check the $L^2$ error on the $\xLabel$ velocity at final time varying $\relaxationParameter_{\momentumX} = \relaxationParameter_{\momentumY}$ ($\linearEquilibrium_{\momentumX} = \linearEquilibrium_{\momentumX}$ is adjusted to enforce the right diffusivity (Reynolds number)).
The results in \Cref{fig:taylorGreen-error-omega_q} show that in this setting, there is a slight advantage in taking $\relaxationParameter_{\momentumX} = \relaxationParameter_{\momentumY}$ above one, let us say around $1.15$.

\subsubsection{Convergence}

\begin{table}
    \begin{center}\caption{
        \label{tab:convergenceTaylorGreen}
        Empirical convergence for the Taylor-Green vortex test for different choices of parameters.
        Errors are computed at final time equal to one.}
        \begin{tabular}{|cc||c|c||c|c||c|c|}
            \hline
                & 
                & \multicolumn{2}{c||}{$(\relaxationParameter_{\density}, \linearEquilibrium_{\density}) = (1, 1/20)$}
                & \multicolumn{2}{c||}{$(\relaxationParameter_{\density}, \linearEquilibrium_{\density}) = (6/5, 3/40)$}
                & \multicolumn{2}{c|}{$(\relaxationParameter_{\density}, \linearEquilibrium_{\density}) = (3/2, 3/20)$}\\
                \hline
                cells-per-dir. & $\spaceStep$ 
                & $L^2$ error on $\velocityX$ & order 
                & $L^2$ error on $\velocityX$ & order 
                & $L^2$ error on $\velocityX$ & order\\
                \hline
        75	& 8.378E-02	& 9.626E-03 &	---			    &   1.375E-02	& ---		    &   1.662E-02	& ---\\      
        112	& 5.610E-02	& 3.719E-03 &	2.37			&	5.593E-03	& 2.24			&	6.872E-03	& 2.20\\
        168	& 3.740E-02	& 2.001E-03 &	1.53			&	2.969E-03	& 1.56			&	3.555E-03	& 1.63\\
        253	& 2.483E-02	& 7.270E-04 &	2.47			&	1.029E-03	& 2.59			&	1.264E-03	& 2.53\\
        379	& 1.658E-02	& 4.231E-04 &	1.34			&	6.201E-04	& 1.25			&	7.393E-04	& 1.33\\
        \hline 
        Average & --- & --- & 1.93 & --- & 1.91 & --- & 1.92 \\
        \hline
    \end{tabular}
    \end{center}
\end{table}

We now make the number of grid-points increase and use the following sets of parameters in \eqref{eq:params} along with $\relaxationParameter_{\momentumX} = \relaxationParameter_{\momentumY} = 23/20 = 1.15$.
The results in \Cref{tab:convergenceTaylorGreen} provide empirical second-order convergence of the scheme in the velocity variable, analogously to \cite{carfora2008discrete}.

\subsection{Poiseuille flow}

We consider a two-dimensional Poiseuille flow in the square $[0, L]^2$, as in \cite[Section 6.1.2]{bouchut2018}.
Given a maximal velocity $U>0$ along the $\xLabel$-axis and a target Reynolds number $\textnormal{Re}>0$, so that the viscosity is $\viscosity = U L /\textnormal{Re}$, the exact (steady) profile is given by 
\begin{equation*}
    \velocityX^{\textnormal{ex}}
    =
    \frac{4U}{L^2}\yLabel(L-\yLabel), 
    \qquad 
    \velocityY^{\textnormal{ex}}
    =0, 
    \qquad 
    \pressureLagrange^{\textnormal{ex}}
    =
    \frac{8\viscosity U}{L^2}(L-\xLabel).
\end{equation*}
In the numerical tests, we select $L = 1$, $U = 1$, and $\textnormal{Re} = 100$.

\subsubsection{Numerical boundary conditions}

Let us now precisely describe how we implement boundary conditions.
The idea is to approximate the conserved quantities $(\density, \momentumX, \momentumY)$ in ghost cells around the domain using second-order formul\ae{} \cite{bouchut2018} (for instance, extrapolation, when needed), and then employ these data in the equilibria of the (lacking) distribution function to be streamed inside the domain.
This approach and the possibility of having convergence for lattice Boltzmann schemes in the scalar case has been investigated in \cite{aregba2025equilibrium}.
This way of proceeding is very convenient, since we devise boundary conditions on ``physical'' quantities we are interested in, and finally use equilibria to construct distribution functions from them.
However, we shall see that when the numerical scheme is genuinely lattice Boltzmann (not a relaxation scheme), this can cause order reduction to one.
We explain how this can be corrected.

\begin{itemize}
    \item Left boundary (at $\xLabel = 0$). For every needed $\yLabel\in\spaceStep(\relatives+\tfrac{1}{2})$:
    \begin{align*}
        \density(\timeVariable, -\tfrac{\spaceStep}{2}, \yLabel)
        &:=
        2 \density(\timeVariable, \tfrac{\spaceStep}{2}, \yLabel)
        -
        \density(\timeVariable, \tfrac{3\spaceStep}{2}, \yLabel),\\
        \momentumX(\timeVariable, -\tfrac{\spaceStep}{2}, \yLabel)
        &:=
        2\densityIncompressible\velocityX^{\textnormal{ex}}(\yLabel)
        -
        \momentumX(\timeVariable, \tfrac{\spaceStep}{2}, \yLabel), \\
        \momentumY(\timeVariable, -\tfrac{\spaceStep}{2}, \yLabel)
        &:=
        2\densityIncompressible\velocityY^{\textnormal{ex}}
        -
        \momentumY(\timeVariable, \tfrac{\spaceStep}{2}, \yLabel).
    \end{align*}
    The aim is to impose the exact velocity profile.
    Notice that the second-order extrapolation on $\density$ is done on points separated by $\spaceStep$, whereas the one for the momentums concerns points with distance $\frac{\spaceStep}{2}$, and we force the exact velocity profile and the ``reference'' density $\densityIncompressible$ at $\xLabel = 0$.
    Then, we prepare the post-relaxation ghost values as 
    \begin{equation}\label{eq:BCPoiseuilleLeftEquilibrium}
        \vectorial{\distributionFunction}_{\xPlusVelocity}^{\collided}(\timeVariable, -\tfrac{\spaceStep}{2}, \yLabel)
        :=
        \vectorial{\distributionFunction}_{\xPlusVelocity}^{\atEquilibrium}
        (\density(\timeVariable, -\tfrac{\spaceStep}{2}, \yLabel), \momentumX(\timeVariable, -\tfrac{\spaceStep}{2}, \yLabel), \momentumY(\timeVariable, -\tfrac{\spaceStep}{2}, \yLabel)).
    \end{equation}
    \item Right boundary (at $\xLabel = L$). For every $\yLabel\in\spaceStep(\relatives+\tfrac{1}{2})$:
    \begin{align*}
        \density(\timeVariable, L+\tfrac{\spaceStep}{2}, \yLabel)
        &:=
        2\densityIncompressible(1+\spaceStep^2 \pressureLagrange^{\textnormal{ex}}(L))
        -\density(\timeVariable, L-\tfrac{\spaceStep}{2}, \yLabel)
        ,\\
        \momentumX(\timeVariable, L+\tfrac{\spaceStep}{2}, \yLabel)
        &:=
        2\momentumX(\timeVariable, L-\tfrac{\spaceStep}{2}, \yLabel)
        -
        \momentumX(\timeVariable, L-\tfrac{3\spaceStep}{2}, \yLabel), \\
        \momentumY(\timeVariable, L+\tfrac{\spaceStep}{2}, \yLabel)
        &:=
        2\momentumY(\timeVariable, L-\tfrac{\spaceStep}{2}, \yLabel)
        -
        \momentumY(\timeVariable, L-\tfrac{3\spaceStep}{2}, \yLabel).
    \end{align*}
    The aim is to impose the exact pressure (equal to zero), using the truncated relation \eqref{eq:densityTendingToConstant}.
    Then
    \begin{equation}\label{eq:BCPoiseuilleRightEquilibrium}
        \vectorial{\distributionFunction}_{\xMinusVelocity}^{\collided}(\timeVariable, L+\tfrac{\spaceStep}{2}, \yLabel)
        :=
        \vectorial{\distributionFunction}_{\xMinusVelocity}^{\atEquilibrium}
        (\density(\timeVariable, L+\tfrac{\spaceStep}{2}, \yLabel), \momentumX(\timeVariable, L+\tfrac{\spaceStep}{2}, \yLabel), \momentumY(\timeVariable, L+\tfrac{\spaceStep}{2}, \yLabel)).
    \end{equation}
    \item Lower boundary (at $\yLabel = 0$). For every $\xLabel\in\spaceStep(\relatives+\tfrac{1}{2})$:
    \begin{align*}
        \density(\timeVariable, \xLabel, -\tfrac{\spaceStep}{2})
        &:=
        2\density(\timeVariable, \xLabel, \tfrac{\spaceStep}{2})
        -\density(\timeVariable, \xLabel, \tfrac{3\spaceStep}{2}), \\
        \momentumX(\timeVariable, \xLabel, -\tfrac{\spaceStep}{2})
        &:=
        2\densityIncompressible \velocityX^{\textnormal{ex}}(0)
        -
        \momentumX(\timeVariable, \xLabel, \tfrac{\spaceStep}{2}), \\
        \momentumY(\timeVariable, \xLabel, -\tfrac{\spaceStep}{2})
        &:=
        2\densityIncompressible \velocityY^{\textnormal{ex}}
        -
        \momentumY(\timeVariable, \xLabel, \tfrac{\spaceStep}{2}), 
    \end{align*}
    and then
    \begin{equation}\label{eq:BCPoiseuilleLowerEquilibrium}
        \vectorial{\distributionFunction}_{\yPlusVelocity}^{\collided}(\timeVariable, \xLabel, -\tfrac{\spaceStep}{2})
        :=
        \vectorial{\distributionFunction}_{\yPlusVelocity}^{\atEquilibrium}
        (\density(\timeVariable, \xLabel, -\tfrac{\spaceStep}{2}), \momentumX(\timeVariable, \xLabel, -\tfrac{\spaceStep}{2}), \momentumY(\timeVariable, \xLabel, -\tfrac{\spaceStep}{2})).
    \end{equation}
    \item Upper boundary (at $\yLabel = L$). This is dealt with analogously to the lower boundary.
\end{itemize}

When the scheme for the momentums is not a relaxation one, \eqref{eq:BCPoiseuilleLowerEquilibrium} is not enough to preserve second-order accuracy.
This is not the case for the left and right boundary, as the exact solution of the Poiseuille flow is constant in the velocity field and linear in the pressure field along the normal vector to these interfaces.
The idea behind the correction is that we assume that the distribution functions are at equilibrium up to $\BigO{\spaceStep}$ terms.
Hence, we can use a first-order extrapolation of the non-equilibrium distribution function $\vectorial{\distributionFunction}^{\textnormal{neq}} = \vectorial{\distributionFunction}-\vectorial{\distributionFunction}^{\atEquilibrium}$ to devise the correction.
Thus, \eqref{eq:BCPoiseuilleLowerEquilibrium} becomes
\begin{multline}\label{eq:BCPoiseuilleLowerEquilibriumPlusCorrection}
        \vectorial{\distributionFunction}_{\yPlusVelocity}^{\collided}(\timeVariable, \xLabel, -\tfrac{\spaceStep}{2})
        :=
        \vectorial{\distributionFunction}_{\yPlusVelocity}^{\atEquilibrium}
        (\density(\timeVariable, \xLabel, -\tfrac{\spaceStep}{2}), \momentumX(\timeVariable, \xLabel, -\tfrac{\spaceStep}{2}), \momentumY(\timeVariable, \xLabel, -\tfrac{\spaceStep}{2}))\\
        +
        \underbrace{\vectorial{\distributionFunction}_{\yPlusVelocity}^{\collided}(\timeVariable, \xLabel, \tfrac{\spaceStep}{2})
        -
        \vectorial{\distributionFunction}_{\yPlusVelocity}^{\atEquilibrium}
        (\density(\timeVariable, \xLabel, \tfrac{\spaceStep}{2}), \momentumX(\timeVariable, \xLabel, \tfrac{\spaceStep}{2}), \momentumY(\timeVariable, \xLabel, \tfrac{\spaceStep}{2}))}_{=\vectorial{\distributionFunction}_{\yPlusVelocity}^{\textnormal{neq}}(\timeVariable, \xLabel, {\spaceStep}/{2})\quad \text{(correction)}}.
\end{multline}

\subsubsection{Convergence}

\begin{table}
    \begin{center}\caption{
        \label{tab:convergencePoiseuille}
        Empirical convergence for the Poiseuille test for different choices of parameters.
        Errors are computed at final time equal to $0.05$.}
        \begin{tabular}{|cc||c|c||c|c|}
            \hline
                & 
                & \multicolumn{2}{c||}{$\relaxationParameter_{\momentumX}=\relaxationParameter_{\momentumY} = 1$}
                & \multicolumn{2}{c|}{$\relaxationParameter_{\momentumX}=\relaxationParameter_{\momentumY} = 1.15$}\\
                \hline
                cells-per-dir. & $\spaceStep$ 
                & $L^2$ error on $\velocityX$ & order 
                & $L^2$ error on $\velocityX$ & order\\
                \hline 
                \hline
                \multicolumn{6}{|c|}{Boundary conditions \eqref{eq:BCPoiseuilleLeftEquilibrium}--\eqref{eq:BCPoiseuilleRightEquilibrium}--\eqref{eq:BCPoiseuilleLowerEquilibrium}}\\
                \hline
        \hline
        75	& 1.333E-02	& 2.938E-05	& ---  & 5.661E-04 &	--- \\
        112	& 8.929E-03	& 1.333E-05	& 1.97 & 3.812E-04 &	0.99 \\
        168	& 5.952E-03	& 5.994E-06	& 1.97 & 2.563E-04 &	0.98 \\
        253	& 3.953E-03	& 2.668E-06	& 1.98 & 1.707E-04 &	0.99 \\
        \hline 
        Average & --- & --- & 1.97 & --- & 0.99\\
        \hline 
                \hline
                \multicolumn{6}{|c|}{Boundary conditions \eqref{eq:BCPoiseuilleLeftEquilibrium}--\eqref{eq:BCPoiseuilleRightEquilibrium}--\eqref{eq:BCPoiseuilleLowerEquilibriumPlusCorrection}}\\
                \hline
        \hline
        75	& 1.333E-02	& 2.943E-05	& ---				        & 1.485E-05 & ---  \\
        112	& 8.929E-03	& 1.335E-05	& 1.97						& 6.672E-06 & 2.00 \\
        168	& 5.952E-03	& 5.997E-06	& 1.97						& 3.043E-06 & 1.94 \\
        253	& 3.953E-03	& 2.670E-06	& 1.98						& 1.368E-06 & 1.95 \\
        \hline 
        Average & --- & --- & 1.97 & --- & 1.96\\
        \hline 
    \end{tabular}
    \end{center}
\end{table}

We simulate up to a final time $0.05$.
Moreover, we consider $\relaxationParameter_{\density} = \frac{6}{5}$, $\linearEquilibrium_{\density} = \frac{3}{40}$ in every numerical simulation.
Two sets of relaxation parameters are considered for the momentums, namely $\relaxationParameter_{\momentumX}=\relaxationParameter_{\momentumY} = 1$ and $\relaxationParameter_{\momentumX}=\relaxationParameter_{\momentumY} = 1.15$.
Finally, we test with the equilibrium boundary conditions \eqref{eq:BCPoiseuilleLeftEquilibrium}--\eqref{eq:BCPoiseuilleRightEquilibrium}--\eqref{eq:BCPoiseuilleLowerEquilibrium} and with the ones featuring corrections on the lower and upper boundary \eqref{eq:BCPoiseuilleLeftEquilibrium}--\eqref{eq:BCPoiseuilleRightEquilibrium}--\eqref{eq:BCPoiseuilleLowerEquilibriumPlusCorrection}.

From the results gathered in \Cref{tab:convergencePoiseuille}, we observe two things.
The first one is that in the case where $\relaxationParameter_{\momentumX}=\relaxationParameter_{\momentumY} = 1.15$, the correction on the boundary conditions is needed to reach second-order accuracy.
The second fact is that $\relaxationParameter_{\momentumX}=\relaxationParameter_{\momentumY} = 1.15$ ensures, compared to $\relaxationParameter_{\momentumX}=\relaxationParameter_{\momentumY} = 1$, errors which are essentially divided by a factor two at any given mesh resolution.

\section{Conclusions}\label{sec:conclusions}

In this paper we have proposed a \strong{second-order accurate vectorial lattice Boltzmann method} for the approximation of the \strong{incompressible Navier-Stokes} equations, inspired by discrete kinetic formulations \cite{carfora2008discrete, bouchut2018} and corresponding relaxation schemes. 
Such approach allows relaxation far from the equilibria, with the possibility of seeking desirable numerical properties, for instance, reduced errors. 
Numerical simulations indicate that the advantages of LBMs over standard relaxation schemes are \strong{often problem-dependent}. 
Future research will establish a more general and rigorous framework to address this aspect and thoroughly describe the cases where non-equilibrium relaxation yields consistently better performances. 

This being said, we have tried to provide as many clues as possible on how to select the numerous parameters in the scheme, both theoretically through spectral analyses, and with numerical experiments.

Despite the method requires a parabolic scaling, hence many time-steps to reach final time, the simplicity of the \strong{collide-and-stream} procedure yields highly efficient and cheap computation of each iteration. 
Therefore, the method is competitive against implicit approaches, where fewer time-steps are needed but each of them carries a significant computational overhead.
Furthermore, the  proposed scheme can be easily embedded into existing efficient parallel solvers, and handle systems with additional equations.
This last point is the main advantage of vectorial schemes over those based on a scalar distribution function. 

Finally, the implementation of accurate and robust \strong{boundary conditions} that reproduce the desired physics in this framework remains essential, in particular when dealing with complex geometries.


\section*{Acknowledgement}
TT received funding from the European Union's Horizon Europe research and innovation program under the Marie Skłodowska-Curie Doctoral Network DataHyking (Grant No. 101072546). TT is member of the INdAM Research National Group of Scientific Computing (INdAM-GNCS).\\

\bibliographystyle{alpha}
\bibliography{biblio}

\end{document}